\newcommand{\bG}{\mathbb{G}}
\newcommand{\bH}{\mathbb{H}}
\newcommand{\bQ}{Q}
\newcommand{\V}{\mathcal{V}}
\newcommand{\B}{\mathcal{B}}
\newcommand{\pG}{\underline{\mathbb{G}}}
\newcommand{\pH}{\underline{\mathbb{H}}}
\newcommand{\al}{\alpha}
\newcommand{\be}{\beta}
\newcommand{\ga}{\gamma}
\newcommand{\Q}{\mathcal{Q}}
\newcommand{\T}{\boldsymbol{T}}
\newcommand{\Tt}{\mathcal{T}}
\newcommand{\wV}{\omega_{\mathcal{V}}}
\newcommand{\wB}{\omega_{\mathcal{B}}}
\newcommand{\wVdual}{\omega_{\mathcal{V}^*}}
\newcommand{\wBdual}{\omega_{\mathcal{B}^*}}
\DeclareMathOperator{\ba}{\backslash}
\DeclareMathOperator{\con}{/}
\newtheorem{theorem}{Theorem}[section]
\newtheorem{lemma}[theorem]{Lemma}
\newtheorem{proposition}[theorem]{Proposition}
\theoremstyle{definition}
\newtheorem{definition}[theorem]{Definition}
\newtheorem{example}[theorem]{Example}
\newtheorem{claim}[theorem]{Claim}
\theoremstyle{remark}
\newtheorem*{remark}{Remark}
\title{A quasi-tree expansion for the surface Tutte polynomial}
\author{Maya Thompson \footnote{mayathompson.math@gmail.com.}}
\date{}
\begin{document}

\maketitle

\abstract{The surface Tutte polynomial has recently been generalised to pseudo-surfaces equipping it with recursive deletion-contraction relations~\cite{maya1}. We use these relations to show that this generalisation naturally possesses a quasi-tree expansion. This extends quasi-tree expansions of the Bollob\'as--Riordan, Las~Vergnas and Krushkal polynomials~\cite{Butler, Champan, V-T}, which we recover from our main result.}

\section{Introduction}
The Tutte polynomial is one of the most influential graph invariants due to its far-reaching applications across many areas of mathematics and the depth of its underlying theory. It specialises to numerous well-known invariants, such as the Jones polynomial of an alternating knot, the partition function of the Potts model in statistical mechanics, the weight enumerator of linear codes, and the chromatic polynomial. A detailed and extensive background on the Tutte polynomial is given in~\cite{Handbook}.

The Tutte polynomial can famously be defined in three equivalent ways, as a state sum over all subgraphs, recursively through deletion and contraction, and as a sum over all spanning trees.  
Tutte first introduced his polynomial in~\cite{TuttePoly} via the spanning tree expansion, using the notion of activity, which we now describe.
Let $G=(V,E)$ be a graph and fix a total ordering $\prec$ on the edges. Given a maximal spanning forest $F$ of $G$, an edge $e\in E(G)-E(F)$ is \emph{externally active} if it is the smallest edge in the unique cycle of $F\cup e$. An edge $e\in E(F)$ is \emph{internally active} if it is the smallest edge in the \emph{cut defined by $e$}. That is, in the set $U_F(e)=\{f\in E(G) : (F\ba e)\cup f  \> \text{ is a maximal spanning forest}\}$. Using activity, the Tutte polynomial can be defined by its \emph{spanning tree expansion} as
\begin{equation}\label{AETutte}T(G;x,y)=\sum_{i,j}t_{ij}x^{i}y^{j},\end{equation}
where $t_{ij}$ is the number of maximal spanning forests of $G$ with internal activity $i$ and external activity $j$.

We are interested in analogues of the Tutte polynomial for graphs cellularly embedded in surfaces, or equivalently ribbon graphs. There has been significant interest in such \emph{topological Tutte polynomials} fueled by Bollob\'as and Riordan's papers~\cite{BR1, BR2}, with several variations arising~\cite{BR1, BR2, maps1, maps2,  HM,  KMT, Krushkal, LV, maya1} over the years, many of which admit expansions akin to the spanning tree expansion~\eqref{AETutte} of the Tutte polynomial. Bollob\'as and Riordan used Tutte's notion of activities to provide a spanning tree expansion for their generalisation of the Tutte polynomial~\cite{BR1}. However, for ribbon graphs, the more natural analogue of a spanning tree is a quasi-tree, i.e., a spanning ribbon subgraph with one boundary component. This perspective is demonstrated by Champanerkar, Kofman, and Stolzfus who extend the concept of activity to quasi-trees and give a quasi-tree expansion for the Bollob\'as--Riordan polynomial for orientable ribbon graphs~\cite{Champan}. This expansion was generalised to non-orientable ribbon graphs by Vignes--Tourneret~\cite{V-T}. Butler~\cite{Butler} then provided a more general quasi-tree expansion for the Krushkal polynomial, from which a quasi-tree expansion can be derived for the Las~Vergnas polynomial and rederived for the Bollob\'as--Riordan polynomial. 

Notably, a key feature of these quasi-tree expansions is that they implicitly rely on structural properties of \emph{graphs embedded in pseudo-surfaces}, a setting formalised more recently. (Informally, a pseudo-surface is obtained from a surface by taking the topological quotient of finitely many paths in the surface.) This setting proves to be the correct one for both deletion-contraction and activity-based expansions.

One topological Tutte polynomial of particular interest is the \emph{surface Tutte polynomial}, introduced by Goodall et al~\cite{maps1,maps2}, which emulated Tutte's construction of the Tutte polynomial as the dichromate.  
As a result of their approach, the surface Tutte polynomial has been shown to have an array of applications, including enumerating the number of colourings, flows and acyclic orientations of ribbon graphs. However, unlike the classical Tutte polynomial, it can only be defined as a state sum and does not admit equivalent definitions via deletion-contraction or activity. These limitations arise as the polynomial is confined to graphs cellularly embedded in surfaces. 

To address the lack of deletion-contraction relations, the \emph{packaged surface Tutte polynomial} was introduced~\cite{maya1}. This polynomial generalises the surface Tutte polynomial to graphs embedded in pseudo-surfaces, which we realise as \emph{packaged ribbon graphs} (defined in Section~\ref{ss.pack}). It unifies various approaches from the literature, inheriting the key properties of its predecessors and specialising to each of them. Notably, it can be defined both as a state sum and recursively through deletion-contraction relations that hold for all edge types.

In this paper, we provide a quasi-tree expansion for the packaged surface Tutte polynomial, thereby completing the trio of formulations that mirror those of the Tutte polynomial. Working in the pseudo-surface setting is essential, as it underpins both the deletion-contraction relations and constructions needed for a quasi-tree expansion. Our approach uses the deletion-contraction relations of the packaged surface Tutte polynomial~\cite{maya1} with previously established notions of activity for quasi-trees~\cite{Champan, V-T}. Our result generalises the quasi-tree expansions found in the literature, and we recover them accordingly.

This paper is structured as follows. In Section~2, we provide the necessary background on ribbon graphs, quasi-trees and a notion of activity for them, and packaged ribbon graphs, which describe graphs embedded in pseudo-surfaces. Section~3 features our main result, Theorem~\ref{mainthm}, a quasi-tree expansion for the packaged surface Tutte polynomial, from which we recover the expansion for the Krushkal polynomial~\cite{Butler}, and hence those for the Bollob\'as--Riordan and Las~Vergnas polynomials.

\section{Preliminaries}

\subsection{Ribbon graphs}
We begin with a brief overview of standard ribbon graph terminology and notation.  Further details on ribbon graphs can be found in \cite{graphsonsurfaces}. A reader familiar with them may skip this subsection.

\begin{definition}
    A \emph{ribbon graph} $\bG=(V,E)$ is a surface with boundary represented by the union of a set of discs $V$, called the vertices, and another set of discs $E$, called the edges, satisfying the following conditions:
   \begin{enumerate}
        \item A vertex and edge are either disjoint or meet in an arc. (An edge is \emph{adjacent} to a vertex if they meet in an arc.)
        \item Each such arc lies on the boundary of just one vertex and one edge.
        \item Every edge contains two such arcs.
    \end{enumerate}
\end{definition}

Given a ribbon graph $\bG$, we use $v(\bG)$, $e(\bG)$, $b(\bG)$ and $k(\bG)$ to denote the number of vertices, edges, boundary components and connected components respectively. When needed we denote its vertex set by $V(\bG)$, edge set by $E(\bG)$ and set of boundary components by $B(\bG)$. Where it makes sense, we  use this notation for graphs too. The \emph{Euler genus} of a ribbon graph is
\[\ga(\bG)=2k(\bG)-v(\bG)+e(\bG)-b(\bG).\]

Let $\bG=(V,E)$ be a ribbon graph and $e\in E$ be an edge. The ribbon graph obtained by \emph{deleting} $e$ is denoted by $\bG\ba e:=(V,E- e)$ (we use the convention of denoting singleton sets by the element they contain). When deleting edges one after another the order in which we do so does not matter, and likewise when deleting vertices. This allows us to extend the operation of deletion to subsets of edges and subsets of vertices.
We say that a ribbon graph $\bH$ is a \emph{ribbon subgraph} of a ribbon graph $\bG$ if it can be obtained from $\bG$ by deleting a subset of edges (possibly empty) and then deleting a subset of isolated vertices (possibly empty). If $\bH$ has the same vertex set as $\bG$, then we say $\bH$ is a \emph{spanning ribbon subgraph} of $\bG$ and we may denote it by $\bG|A$ where $A$ is the edge set of $\bH$.

\begin{definition} 
 Let $\bG=(V,E)$ be a ribbon graph. Let $e\in E$ and  $u$ and $v$ be its adjacent vertices, which are not necessarily distinct. Then  $\bG/e$ denotes the ribbon graph obtained as follows. Consider the boundary component(s) of $e\cup u \cup v$ as curves on $\bG$. For each resulting curve, attach a disc, which will form a vertex of $\bG/e$, by identifying its boundary component with the curve. Delete $e$, $u$ and $v$ from the resulting complex.
We say that $\bG/e$ is obtained from $\bG$ by \emph{contracting} $e$.
\end{definition}

When contracting edges one after another the order in which we do so does not matter, so we may also extend the operation of contraction to subsets of edges. 
For a depiction of ribbon graph contraction see Table~\ref{Con} and disregard the colouring and weighting, and similarly for ribbon graph deletion and Table~\ref{Del}.

An edge is a \emph{bridge} if deleting it increases the number of connected components in the ribbon graph.  An edge is a \emph{loop} if it is adjacent to precisely one vertex. A loop is \emph{non-orientable} if the union of the loop with its adjacent vertex is homeomorphic to the M\"obius band, otherwise it is \emph{orientable}.  A loop is \emph{plane} if contracting it increases the number of connected components in the ribbon graph. (Note that plane loops are always orientable.)
  Two loops $e$ and $f$ are \emph{interlaced} if they are adjacent to the same vertex and their ends are met in the cyclic order $e$ $f$ $e$ $f$ when traveling around the boundary of that vertex.

\medskip

The \emph{(geometric) dual} $\bG^*$ of a ribbon graph $\bG=(V,E)$ can be constructed in the following way. 
Recall that, topologically, a ribbon graph is a surface with boundary. We cap off the holes using a set of discs, denoted by $V'$, to obtain a surface without boundary. The geometric dual of $\bG$ is the ribbon graph $\bG^* = (V',E)$. Observe that the edges of $\bG$ and $\bG^*$ are identical. The only change is which arcs on their boundaries meet vertices.

First introduced by Chmutov in \cite{ChmutovPD}, \emph{partial duality} is a local involution on the edges of a ribbon graph that when applied to the entire edge set results in the geometric dual. 
\begin{definition}
Let $\bG=(V,E)$ be a ribbon graph and $A\subseteq E$, then the \emph{partial dual}  $\bG^{A}$ is the ribbon graph constructed as follows. The spanning ribbon subgraph $\bG|A$ is a surface with boundary. 
 Cap off the holes of $\bG|A$ using a set of discs, denoted by $V'$, to obtain a surface without boundary. Then delete the interior of the discs in $V$. The resulting surface with boundary is the ribbon graph $\bG^{A}$ with vertex set $V'$ and edge set $E$. Observe that the edges of $\bG$ and $\bG^{A}$ are identical. The only potential change is which arcs on their boundaries meet vertices.  
\end{definition}

We list some known properties of partial duality that we will utilise later.

\begin{proposition}[\cite{ChmutovPD}]\label{p.pd}
    Let $\bG$ be a ribbon graph and $X,Y\subseteq E(\bG)$. Then
    \begin{enumerate}
        \item $(\bG^X)^Y=(\bG^Y)^X$,
        \item $\bG \con X = \bG^{X}\ba X$, and
        \item partial duality commutes with deletion and contraction.
    \end{enumerate}
\end{proposition}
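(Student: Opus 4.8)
The plan is to reduce all three statements to the behaviour of partial duality on single edges, where the operation becomes transparent, and then to bootstrap to arbitrary edge sets. The surface definition given above is awkward for algebraic manipulation, so I would first pass to a combinatorial encoding of ribbon graphs — Chmutov's arrow presentations (equivalently, the description of a ribbon graph by fixed-point-free involutions on its set of half-edges) — in which taking the partial dual $\bG^{\{e\}}$ with respect to a single edge $e$ is a purely local move affecting only the two arrows labelled $e$ and leaving everything else fixed. The bulk of the work is checking that this local rule reproduces the surface definition and that iterating it over the elements of $A$ computes $\bG^{A}$; granting this, parts (1)--(3) follow cleanly.

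For part (2), I would first treat a single edge. By definition $\bG \con e$ is formed by capping the boundary curves of $e \cup u \cup v$ with discs and deleting $e$, $u$, $v$, while $\bG^{\{e\}} \ba e$ caps the boundary components of the spanning subgraph $\bG|\{e\}$ and then removes $e$. The only boundary components of $\bG|\{e\}$ that differ from isolated-vertex circles are exactly those of $e \cup u \cup v$, and capping an isolated vertex's boundary circle and re-deleting its interior returns the same vertex; hence the two constructions coincide. For general $X$ I would use the order-independence of contraction noted above together with induction, or equivalently describe $\bG \con X$ directly as the result of capping the boundary components of $\bG|X$ and deleting $X$ — which is precisely $\bG^{X} \ba X$.

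For part (1), I would establish the single-edge commutation $(\bG^{\{e\}})^{\{f\}} = (\bG^{\{f\}})^{\{e\}}$ for distinct $e$, $f$ first. Since each single-edge partial dual only alters the arrows labelled by the edge being dualized, the moves for distinct $e$ and $f$ act on disjoint data and therefore commute; in fact one obtains $(\bG^{\{e\}})^{\{f\}} = \bG^{\{e,f\}}$. Iterating, $\bG^{X}$ is independent of the chosen ordering of $X$, and more strongly $(\bG^{X})^{Y} = \bG^{X \triangle Y}$. As $X \triangle Y = Y \triangle X$, the commutativity $(\bG^{X})^{Y} = (\bG^{Y})^{X}$ is immediate.

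Part (3) I would deduce from the previous two. That partial duality commutes with deletion reduces, after passing to single edges, to the observation that deleting an edge $e \notin A$ and dualizing over $A$ act on disjoint arrows, so $(\bG \ba e)^{A} = (\bG^{A}) \ba e$; induction extends this to deleting any subset disjoint from $A$. Commutation with contraction then follows by rewriting contraction via part (2): using $\bG \con e = \bG^{\{e\}} \ba e$ together with the commutativity from part (1), one converts a contraction into a partial dual followed by a deletion and pushes the outer partial dual through. The main obstacle throughout is the very first step — verifying rigorously that the global surface operation of capping boundary components agrees with the local arrow-presentation rule, and that the latter may legitimately be applied one edge at a time. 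Once this dictionary is in place, parts (1)--(3) are short; without it, the capping operation is genuinely global and the disjointness arguments cannot be made directly on the surface.
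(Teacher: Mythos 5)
The paper does not prove this proposition at all: it is quoted verbatim from Chmutov's paper \cite{ChmutovPD} (see also the treatment in \cite{graphsonsurfaces}), so there is no in-paper argument to compare yours against. Judged on its own, your sketch follows the standard route from that literature and is essentially correct: pass to arrow presentations, observe that the single-edge partial dual is a local move on the two arrows labelled $e$, deduce $(\bG^{\{e\}})^{\{f\}}=(\bG^{\{f\}})^{\{e\}}$ and hence $(\bG^X)^Y=\bG^{X\triangle Y}$ for part (1), match the capping description of $\bG^{\{e\}}\ba e$ against the capping description of $\bG\con e$ for part (2), and get part (3) from disjointness of the affected arrows plus the rewriting $\bG\con e=\bG^{\{e\}}\ba e$. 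You are also right that the genuine content is the dictionary between the global surface operation (capping boundary components) and the local arrow-presentation rule, and that once this is in place the three parts are short; that verification is exactly what \cite{ChmutovPD} supplies. One small organisational caveat: your extension of part (2) from a single edge to a set $X$ by induction quietly uses parts (1) and (3) (to push $\bG\con X\con e=(\bG^X\ba X)^{\{e\}}\ba e$ into the form $\bG^{X\cup e}\ba (X\cup e)$), so you should either prove those first or use the direct global description of $\bG\con X$ as capping the boundary of $\bG|X$; as written the logical order of the three parts needs a little care but nothing fails.
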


\begin{table}[!t]
    \centering
    \begin{tabular}{| c | c |}
    \hline
       $\bG$  & $\bG^e$  \\
       \hline
       \tikzset{every picture/.style={line width=0.75pt}} 

\begin{tikzpicture}[x=0.45pt,y=0.45pt,yscale=-1,xscale=1]

\draw  [draw opacity=0][fill={rgb, 255:red, 208; green, 208; blue, 208 }  ,fill opacity=1 ] (76.75,45.59) -- (23.64,47.04) -- (24.07,62.85) -- (77.18,61.4) -- cycle ;
\draw  [draw opacity=0][fill={rgb, 255:red, 208; green, 208; blue, 208 }  ,fill opacity=1 ] (87.82,66.54) -- (50.87,104.71) -- (39.5,93.71) -- (76.45,55.54) -- cycle ;
\draw  [draw opacity=0][fill={rgb, 255:red, 208; green, 208; blue, 208 }  ,fill opacity=1 ] (89.12,38.83) -- (48.34,4.77) -- (38.2,16.91) -- (78.98,50.97) -- cycle ;
\draw    (81.39,73.37) -- (50.87,104.71) ;
\draw    (70.27,62.27) -- (39.5,93.71) ;
\draw    (48.34,4.77) -- (80.41,32.07) ;
\draw    (38.2,16.91) -- (71.75,44.59) ;
\draw  [draw opacity=0][fill={rgb, 255:red, 208; green, 208; blue, 208 }  ,fill opacity=1 ] (233.72,68.62) -- (277.39,98.87) -- (286.4,85.86) -- (242.73,55.61) -- cycle ;
\draw  [draw opacity=0][fill={rgb, 255:red, 208; green, 208; blue, 208 }  ,fill opacity=1 ] (233.72,42.61) -- (277.39,12.36) -- (286.4,25.36) -- (242.73,55.61) -- cycle ;
\draw  [fill={rgb, 255:red, 208; green, 208; blue, 208 }  ,fill opacity=1 ] (81.25,44.91) -- (226.5,44.91) -- (226.5,65.91) -- (81.25,65.91) -- cycle ;
\draw  [fill={rgb, 255:red, 128; green, 128; blue, 128 }  ,fill opacity=1 ] (201,55) .. controls (201,41.19) and (212.19,30) .. (226,30) .. controls (239.81,30) and (251,41.19) .. (251,55) .. controls (251,68.81) and (239.81,80) .. (226,80) .. controls (212.19,80) and (201,68.81) .. (201,55) -- cycle ;
\draw    (107.9,44.91) -- (202.9,44.91) ;
\draw    (108.7,65.71) -- (203.7,65.71) ;
\draw    (241.35,74.06) -- (277.39,98.87) ;
\draw    (250.1,61.01) -- (286.4,85.86) ;
\draw    (277.39,12.36) -- (243,36.66) ;
\draw    (286.4,25.36) -- (250.5,49.91) ;
\draw    (23.64,47.04) -- (76.75,45.59) ;
\draw    (24.07,62.85) -- (77.18,61.4) ;
\draw  [fill={rgb, 255:red, 128; green, 128; blue, 128 }  ,fill opacity=1 ] (61,56) .. controls (61,42.19) and (72.19,31) .. (86,31) .. controls (99.81,31) and (111,42.19) .. (111,56) .. controls (111,69.81) and (99.81,81) .. (86,81) .. controls (72.19,81) and (61,69.81) .. (61,56) -- cycle ;

\end{tikzpicture} & \tikzset{every picture/.style={line width=0.75pt}} 

\begin{tikzpicture}[x=0.45pt,y=0.45pt,yscale=-1,xscale=1]

\draw  [draw opacity=0][fill={rgb, 255:red, 208; green, 208; blue, 208 }  ,fill opacity=1 ] (136.75,49.88) -- (83.64,51.33) -- (84.07,67.14) -- (137.18,65.69) -- cycle ;
\draw  [draw opacity=0][fill={rgb, 255:red, 208; green, 208; blue, 208 }  ,fill opacity=1 ] (147.82,70.83) -- (110.87,109) -- (99.5,98) -- (136.45,59.83) -- cycle ;
\draw  [draw opacity=0][fill={rgb, 255:red, 208; green, 208; blue, 208 }  ,fill opacity=1 ] (149.12,43.12) -- (108.34,9.06) -- (98.2,21.2) -- (138.98,55.26) -- cycle ;
\draw    (141.39,77.66) -- (110.87,109) ;
\draw    (130.27,66.56) -- (99.5,98) ;
\draw    (108.34,9.06) -- (140.41,36.36) ;
\draw    (98.2,21.2) -- (131.75,48.88) ;
\draw    (83.64,51.33) -- (136.75,49.88) ;
\draw    (84.07,67.14) -- (137.18,65.69) ;
\draw  [draw opacity=0][fill={rgb, 255:red, 208; green, 208; blue, 208 }  ,fill opacity=1 ] (159.82,69.25) -- (203.49,99.5) -- (212.5,86.5) -- (168.83,56.25) -- cycle ;
\draw  [draw opacity=0][fill={rgb, 255:red, 208; green, 208; blue, 208 }  ,fill opacity=1 ] (159.82,43.24) -- (203.49,12.99) -- (212.5,26) -- (168.83,56.25) -- cycle ;
\draw    (167.45,74.7) -- (203.49,99.5) ;
\draw    (176.2,61.65) -- (212.5,86.5) ;
\draw    (203.49,12.99) -- (169.1,37.3) ;
\draw    (212.5,26) -- (176.6,50.55) ;
\draw  [fill={rgb, 255:red, 208; green, 208; blue, 208 }  ,fill opacity=1 ] (147.07,76.01) .. controls (152.49,92.27) and (164.03,103.51) .. (177.4,103.51) .. controls (195.97,103.51) and (211.03,81.82) .. (211.03,55.06) .. controls (211.03,28.31) and (195.97,6.62) .. (177.4,6.62) .. controls (165.09,6.62) and (154.33,16.14) .. (148.47,30.35) -- (159.08,39.41) .. controls (162.42,27.8) and (169.37,19.81) .. (177.4,19.81) .. controls (188.69,19.81) and (197.84,35.59) .. (197.84,55.06) .. controls (197.84,74.53) and (188.69,90.32) .. (177.4,90.32) .. controls (168.8,90.32) and (161.43,81.15) .. (158.42,68.17) -- cycle ;
\draw  [fill={rgb, 255:red, 128; green, 128; blue, 128 }  ,fill opacity=1 ] (124.7,55.06) .. controls (124.7,40.51) and (136.5,28.71) .. (151.05,28.71) .. controls (165.6,28.71) and (177.4,40.51) .. (177.4,55.06) .. controls (177.4,69.62) and (165.6,81.41) .. (151.05,81.41) .. controls (136.5,81.41) and (124.7,69.62) .. (124.7,55.06) -- cycle ;

\end{tikzpicture} \\ 
       \hline
       \tikzset{every picture/.style={line width=0.75pt}} 

\begin{tikzpicture}[x=0.45pt,y=0.45pt,yscale=-1,xscale=1]

\draw  [draw opacity=0][fill={rgb, 255:red, 208; green, 208; blue, 208 }  ,fill opacity=1 ] (136.75,49.88) -- (83.64,51.33) -- (84.07,67.14) -- (137.18,65.69) -- cycle ;
\draw  [draw opacity=0][fill={rgb, 255:red, 208; green, 208; blue, 208 }  ,fill opacity=1 ] (147.82,70.83) -- (110.87,109) -- (99.5,98) -- (136.45,59.83) -- cycle ;
\draw  [draw opacity=0][fill={rgb, 255:red, 208; green, 208; blue, 208 }  ,fill opacity=1 ] (149.12,43.12) -- (108.34,9.06) -- (98.2,21.2) -- (138.98,55.26) -- cycle ;
\draw    (141.39,77.66) -- (110.87,109) ;
\draw    (130.27,66.56) -- (99.5,98) ;
\draw    (108.34,9.06) -- (140.41,36.36) ;
\draw    (98.2,21.2) -- (131.75,48.88) ;
\draw    (83.64,51.33) -- (136.75,49.88) ;
\draw    (84.07,67.14) -- (137.18,65.69) ;
\draw  [draw opacity=0][fill={rgb, 255:red, 208; green, 208; blue, 208 }  ,fill opacity=1 ] (159.82,69.25) -- (203.49,99.5) -- (212.5,86.5) -- (168.83,56.25) -- cycle ;
\draw  [draw opacity=0][fill={rgb, 255:red, 208; green, 208; blue, 208 }  ,fill opacity=1 ] (159.82,43.24) -- (203.49,12.99) -- (212.5,26) -- (168.83,56.25) -- cycle ;
\draw    (167.45,74.7) -- (203.49,99.5) ;
\draw    (176.2,61.65) -- (212.5,86.5) ;
\draw    (203.49,12.99) -- (169.1,37.3) ;
\draw    (212.5,26) -- (176.6,50.55) ;
\draw  [fill={rgb, 255:red, 208; green, 208; blue, 208 }  ,fill opacity=1 ] (147.07,76.01) .. controls (152.49,92.27) and (164.03,103.51) .. (177.4,103.51) .. controls (195.97,103.51) and (211.03,81.82) .. (211.03,55.06) .. controls (211.03,28.31) and (195.97,6.62) .. (177.4,6.62) .. controls (165.09,6.62) and (154.33,16.14) .. (148.47,30.35) -- (159.08,39.41) .. controls (162.42,27.8) and (169.37,19.81) .. (177.4,19.81) .. controls (188.69,19.81) and (197.84,35.59) .. (197.84,55.06) .. controls (197.84,74.53) and (188.69,90.32) .. (177.4,90.32) .. controls (168.8,90.32) and (161.43,81.15) .. (158.42,68.17) -- cycle ;
\draw  [fill={rgb, 255:red, 128; green, 128; blue, 128 }  ,fill opacity=1 ] (124.7,55.06) .. controls (124.7,40.51) and (136.5,28.71) .. (151.05,28.71) .. controls (165.6,28.71) and (177.4,40.51) .. (177.4,55.06) .. controls (177.4,69.62) and (165.6,81.41) .. (151.05,81.41) .. controls (136.5,81.41) and (124.7,69.62) .. (124.7,55.06) -- cycle ;

\end{tikzpicture} & \tikzset{every picture/.style={line width=0.75pt}} 

\begin{tikzpicture}[x=0.45pt,y=0.45pt,yscale=-1,xscale=1]

\draw  [draw opacity=0][fill={rgb, 255:red, 208; green, 208; blue, 208 }  ,fill opacity=1 ] (76.75,45.59) -- (23.64,47.04) -- (24.07,62.85) -- (77.18,61.4) -- cycle ;
\draw  [draw opacity=0][fill={rgb, 255:red, 208; green, 208; blue, 208 }  ,fill opacity=1 ] (87.82,66.54) -- (50.87,104.71) -- (39.5,93.71) -- (76.45,55.54) -- cycle ;
\draw  [draw opacity=0][fill={rgb, 255:red, 208; green, 208; blue, 208 }  ,fill opacity=1 ] (89.12,38.83) -- (48.34,4.77) -- (38.2,16.91) -- (78.98,50.97) -- cycle ;
\draw    (81.39,73.37) -- (50.87,104.71) ;
\draw    (70.27,62.27) -- (39.5,93.71) ;
\draw    (48.34,4.77) -- (80.41,32.07) ;
\draw    (38.2,16.91) -- (71.75,44.59) ;
\draw  [draw opacity=0][fill={rgb, 255:red, 208; green, 208; blue, 208 }  ,fill opacity=1 ] (233.72,68.62) -- (277.39,98.87) -- (286.4,85.86) -- (242.73,55.61) -- cycle ;
\draw  [draw opacity=0][fill={rgb, 255:red, 208; green, 208; blue, 208 }  ,fill opacity=1 ] (233.72,42.61) -- (277.39,12.36) -- (286.4,25.36) -- (242.73,55.61) -- cycle ;
\draw  [fill={rgb, 255:red, 208; green, 208; blue, 208 }  ,fill opacity=1 ] (81.25,44.91) -- (226.5,44.91) -- (226.5,65.91) -- (81.25,65.91) -- cycle ;
\draw  [fill={rgb, 255:red, 128; green, 128; blue, 128 }  ,fill opacity=1 ] (201,55) .. controls (201,41.19) and (212.19,30) .. (226,30) .. controls (239.81,30) and (251,41.19) .. (251,55) .. controls (251,68.81) and (239.81,80) .. (226,80) .. controls (212.19,80) and (201,68.81) .. (201,55) -- cycle ;
\draw    (107.9,44.91) -- (202.9,44.91) ;
\draw    (108.7,65.71) -- (203.7,65.71) ;
\draw    (241.35,74.06) -- (277.39,98.87) ;
\draw    (250.1,61.01) -- (286.4,85.86) ;
\draw    (277.39,12.36) -- (243,36.66) ;
\draw    (286.4,25.36) -- (250.5,49.91) ;
\draw    (23.64,47.04) -- (76.75,45.59) ;
\draw    (24.07,62.85) -- (77.18,61.4) ;
\draw  [fill={rgb, 255:red, 128; green, 128; blue, 128 }  ,fill opacity=1 ] (61,56) .. controls (61,42.19) and (72.19,31) .. (86,31) .. controls (99.81,31) and (111,42.19) .. (111,56) .. controls (111,69.81) and (99.81,81) .. (86,81) .. controls (72.19,81) and (61,69.81) .. (61,56) -- cycle ;

\end{tikzpicture} \\
       \hline
       \tikzset{every picture/.style={line width=0.75pt}} 

\begin{tikzpicture}[x=0.45pt,y=0.45pt,yscale=-1,xscale=1]

\draw  [draw opacity=0][fill={rgb, 255:red, 208; green, 208; blue, 208 }  ,fill opacity=1 ] (135.75,48.88) -- (82.64,50.33) -- (83.07,66.14) -- (136.18,64.69) -- cycle ;
\draw  [draw opacity=0][fill={rgb, 255:red, 208; green, 208; blue, 208 }  ,fill opacity=1 ] (146.82,69.83) -- (109.87,108) -- (98.5,97) -- (135.45,58.83) -- cycle ;
\draw  [draw opacity=0][fill={rgb, 255:red, 208; green, 208; blue, 208 }  ,fill opacity=1 ] (148.12,42.12) -- (107.34,8.06) -- (97.2,20.2) -- (137.98,54.26) -- cycle ;
\draw    (140.39,76.66) -- (109.87,108) ;
\draw    (129.27,65.56) -- (98.5,97) ;
\draw    (107.34,8.06) -- (139.41,35.36) ;
\draw    (97.2,20.2) -- (130.75,47.88) ;
\draw    (82.64,50.33) -- (135.75,48.88) ;
\draw    (83.07,66.14) -- (136.18,64.69) ;
\draw  [draw opacity=0][fill={rgb, 255:red, 208; green, 208; blue, 208 }  ,fill opacity=1 ] (159.82,69.25) -- (203.49,99.5) -- (212.5,86.5) -- (168.83,56.25) -- cycle ;
\draw  [draw opacity=0][fill={rgb, 255:red, 208; green, 208; blue, 208 }  ,fill opacity=1 ] (159.82,43.24) -- (203.49,12.99) -- (212.5,26) -- (168.83,56.25) -- cycle ;
\draw    (167.45,74.7) -- (203.49,99.5) ;
\draw    (176.2,61.65) -- (212.5,86.5) ;
\draw    (203.49,12.99) -- (169.1,37.3) ;
\draw    (212.5,26) -- (176.6,50.55) ;
\draw  [fill={rgb, 255:red, 208; green, 208; blue, 208 }  ,fill opacity=1 ] (176.15,9.62) .. controls (191.55,9.82) and (213,31.92) .. (204.5,56.92) .. controls (196.5,47.42) and (189,24.17) .. (176.15,22.81) .. controls (164.25,23.42) and (158.67,36.18) .. (157.25,41.92) .. controls (147.42,33.68) and (152.76,38.2) .. (147.5,33.42) .. controls (149.28,28.75) and (160.75,9.42) .. (176.15,9.62) -- cycle ;
\draw  [fill={rgb, 255:red, 208; green, 208; blue, 208 }  ,fill opacity=1 ] (176.15,104.22) .. controls (191.55,104.02) and (213,81.92) .. (204.5,56.92) .. controls (196.5,66.42) and (189,89.67) .. (176.15,91.03) .. controls (164.25,90.42) and (158.67,77.66) .. (157.25,71.92) .. controls (147.42,80.16) and (152.76,75.64) .. (147.5,80.42) .. controls (149.28,85.1) and (160.75,104.42) .. (176.15,104.22) -- cycle ;
\draw  [fill={rgb, 255:red, 128; green, 128; blue, 128 }  ,fill opacity=1 ] (124.7,55.06) .. controls (124.7,40.51) and (136.5,28.71) .. (151.05,28.71) .. controls (165.6,28.71) and (177.4,40.51) .. (177.4,55.06) .. controls (177.4,69.62) and (165.6,81.41) .. (151.05,81.41) .. controls (136.5,81.41) and (124.7,69.62) .. (124.7,55.06) -- cycle ;

\end{tikzpicture}  & \tikzset{every picture/.style={line width=0.75pt}} 

\begin{tikzpicture}[x=0.45pt,y=0.45pt,yscale=-1,xscale=1]

\draw  [draw opacity=0][fill={rgb, 255:red, 208; green, 208; blue, 208 }  ,fill opacity=1 ] (126.75,47.88) -- (73.64,49.33) -- (74.07,65.14) -- (127.18,63.69) -- cycle ;
\draw  [draw opacity=0][fill={rgb, 255:red, 208; green, 208; blue, 208 }  ,fill opacity=1 ] (137.82,68.83) -- (100.87,107) -- (89.5,96) -- (126.45,57.83) -- cycle ;
\draw  [draw opacity=0][fill={rgb, 255:red, 208; green, 208; blue, 208 }  ,fill opacity=1 ] (139.12,41.12) -- (98.34,7.06) -- (88.2,19.2) -- (128.98,53.26) -- cycle ;
\draw    (131.39,75.66) -- (100.87,107) ;
\draw    (120.27,64.56) -- (89.5,96) ;
\draw    (98.34,7.06) -- (130.41,34.36) ;
\draw    (88.2,19.2) -- (121.75,46.88) ;
\draw    (73.64,49.33) -- (126.75,47.88) ;
\draw    (74.07,65.14) -- (127.18,63.69) ;
\draw  [draw opacity=0][fill={rgb, 255:red, 208; green, 208; blue, 208 }  ,fill opacity=1 ] (177.92,68.25) -- (221.59,98.5) -- (230.6,85.5) -- (186.93,55.25) -- cycle ;
\draw  [draw opacity=0][fill={rgb, 255:red, 208; green, 208; blue, 208 }  ,fill opacity=1 ] (177.92,42.24) -- (221.59,11.99) -- (230.6,25) -- (186.93,55.25) -- cycle ;
\draw    (185.55,73.7) -- (221.59,98.5) ;
\draw    (194.3,60.65) -- (230.6,85.5) ;
\draw    (221.59,11.99) -- (187.2,36.3) ;
\draw    (230.6,25) -- (194.7,49.55) ;
\draw  [fill={rgb, 255:red, 208; green, 208; blue, 208 }  ,fill opacity=1 ] (196.82,5.95) .. controls (212.22,6.15) and (233.67,28.25) .. (225.17,53.25) .. controls (217.17,43.75) and (209.67,20.5) .. (196.82,19.14) .. controls (184.92,19.75) and (179.34,32.51) .. (177.92,38.25) .. controls (172.25,45.43) and (172.26,47.71) .. (167,42.93) .. controls (166.75,30.93) and (181.42,5.75) .. (196.82,5.95) -- cycle ;
\draw  [fill={rgb, 255:red, 208; green, 208; blue, 208 }  ,fill opacity=1 ] (196.82,100.55) .. controls (212.22,100.35) and (233.67,78.25) .. (225.17,53.25) .. controls (217.17,62.75) and (209.67,86) .. (196.82,87.36) .. controls (184.92,86.75) and (179.34,73.99) .. (177.92,68.25) .. controls (172.25,61.07) and (172.51,60.4) .. (167.25,65.18) .. controls (167.25,77.54) and (181.42,100.75) .. (196.82,100.55) -- cycle ;
\draw  [fill={rgb, 255:red, 128; green, 128; blue, 128 }  ,fill opacity=1 ] (145.82,71.68) .. controls (137.73,78.91) and (125.96,77.22) .. (119.55,67.9) .. controls (113.13,58.58) and (114.49,45.17) .. (122.58,37.93) .. controls (130.68,30.7) and (142.44,32.39) .. (148.86,41.71) .. controls (156.6,52.96) and (160.48,58.59) .. (160.47,58.59) .. controls (160.48,58.59) and (155.59,62.95) .. (145.82,71.68) -- cycle ;
\draw  [fill={rgb, 255:red, 128; green, 128; blue, 128 }  ,fill opacity=1 ] (167.19,67.93) .. controls (173.54,77.15) and (185.31,78.69) .. (193.49,71.39) .. controls (201.68,64.08) and (203.16,50.68) .. (196.81,41.46) .. controls (190.46,32.24) and (178.69,30.69) .. (170.51,38) .. controls (160.64,46.82) and (155.7,51.23) .. (155.69,51.23) .. controls (155.7,51.23) and (159.53,56.8) .. (167.19,67.93) -- cycle ;

\end{tikzpicture}\\\hline
    \end{tabular}
    \caption{The partial dual of an edge of a ribbon graph.}
    \label{t.pd}
\end{table}

Note that, as partial duality is commutative, it can be formed one edge at a time and Table~\ref{t.pd} depicts how to take the partial dual with respect to the different edge types. An example of a ribbon graph $\bG$ with edge $f\in E(\bG)$ and the partial dual $\bG^f$ is given in Figure~\ref{f.ex}.

\subsection{Quasi-trees}
 
\begin{definition}
    A \emph{quasi-tree} is a ribbon graph with precisely one boundary component. For a connected ribbon graph $\bG$, let $\Q_{\bG}$ denote the set of spanning ribbon subgraphs of $\bG$ that are quasi-trees.
\end{definition}

\begin{remark}
    We only consider connected ribbon graphs in this paper, however our work can be generalised to disconnected ribbon graphs using quasi-forests.
    A \emph{quasi-forest} is a ribbon graph where the number of connected components is equal to the number of boundary component. For a ribbon graph $\bG$, a spanning ribbon subgraph is a quasi-forest of $\bG$ if it has $k(\bG)$-many boundary components. When $\bG$ is connected, its quasi-forests are quasi-trees. 
\end{remark}

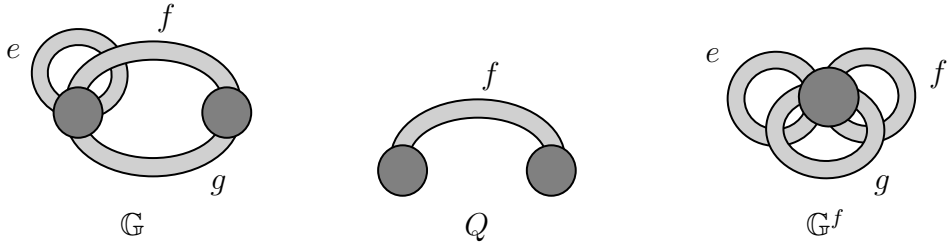
\begin{figure}
    \centering
    \begin{tabular}{ccc}
        \tikzset{every picture/.style={line width=0.75pt}} 

\begin{tikzpicture}[x=0.75pt,y=0.75pt,yscale=-1,xscale=1]

\draw  [color={rgb, 255:red, 0; green, 0; blue, 0 }  ,draw opacity=1 ][fill={rgb, 255:red, 208; green, 208; blue, 208 }  ,fill opacity=1 ][line width=0.75]  (202.56,102) .. controls (190.99,98.42) and (183.64,87.26) .. (185.84,75.86) .. controls (188.21,63.58) and (200.78,55.69) .. (213.91,58.23) .. controls (227.05,60.77) and (235.77,72.78) .. (233.39,85.05) .. controls (231.57,94.52) and (223.68,101.38) .. (214.15,102.86) -- (212.54,94.93) .. controls (218.97,94.08) and (224.28,89.67) .. (225.47,83.52) .. controls (226.99,75.62) and (221.13,67.85) .. (212.38,66.16) .. controls (203.63,64.46) and (195.29,69.5) .. (193.77,77.39) .. controls (192.35,84.74) and (197.32,91.99) .. (205.07,94.32) -- cycle ;
\draw  [color={rgb, 255:red, 0; green, 0; blue, 0 }  ,draw opacity=1 ][fill={rgb, 255:red, 208; green, 208; blue, 208 }  ,fill opacity=1 ][line width=0.75]  (290.53,100.94) .. controls (290.55,101.29) and (290.56,101.64) .. (290.56,102) .. controls (290.56,118.63) and (270.86,132.12) .. (246.56,132.12) .. controls (222.26,132.12) and (202.56,118.63) .. (202.56,102) .. controls (202.56,101.85) and (202.56,101.7) .. (202.56,101.55) -- (211.81,101.64) .. controls (211.81,101.76) and (211.8,101.88) .. (211.8,102) .. controls (211.8,113.53) and (227.36,122.88) .. (246.56,122.88) .. controls (265.75,122.88) and (281.31,113.53) .. (281.31,102) .. controls (281.31,101.72) and (281.31,101.44) .. (281.29,101.17) -- cycle ;
\draw  [color={rgb, 255:red, 0; green, 0; blue, 0 }  ,draw opacity=1 ][fill={rgb, 255:red, 208; green, 208; blue, 208 }  ,fill opacity=1 ][line width=0.75]  (202.78,95.18) .. controls (202.76,94.83) and (202.75,94.48) .. (202.75,94.13) .. controls (202.75,77.49) and (222.45,64) .. (246.75,64) .. controls (271.05,64) and (290.75,77.49) .. (290.75,94.13) .. controls (290.75,94.27) and (290.75,94.42) .. (290.75,94.57) -- (281.5,94.48) .. controls (281.5,94.36) and (281.5,94.24) .. (281.5,94.13) .. controls (281.5,82.59) and (265.94,73.25) .. (246.75,73.25) .. controls (227.56,73.25) and (212,82.59) .. (212,94.13) .. controls (212,94.4) and (212,94.68) .. (212.02,94.96) -- cycle ;
\draw  [color={rgb, 255:red, 0; green, 0; blue, 0 }  ,draw opacity=1 ][fill={rgb, 255:red, 128; green, 128; blue, 128 }  ,fill opacity=1 ][line width=0.75]  (196.45,99.92) .. controls (196.45,92.89) and (201.98,87.19) .. (208.79,87.19) .. controls (215.6,87.19) and (221.12,92.89) .. (221.12,99.92) .. controls (221.12,106.95) and (215.6,112.65) .. (208.79,112.65) .. controls (201.98,112.65) and (196.45,106.95) .. (196.45,99.92) -- cycle ;
\draw  [color={rgb, 255:red, 0; green, 0; blue, 0 }  ,draw opacity=1 ][fill={rgb, 255:red, 128; green, 128; blue, 128 }  ,fill opacity=1 ][line width=0.75]  (271.45,99.92) .. controls (271.45,92.89) and (276.98,87.19) .. (283.79,87.19) .. controls (290.6,87.19) and (296.12,92.89) .. (296.12,99.92) .. controls (296.12,106.95) and (290.6,112.65) .. (283.79,112.65) .. controls (276.98,112.65) and (271.45,106.95) .. (271.45,99.92) -- cycle ;

\draw (171,63.9) node [anchor=north west][inner sep=0.75pt]    {$e$};
\draw (247,43.4) node [anchor=north west][inner sep=0.75pt]    {$f$};
\draw (274,129.9) node [anchor=north west][inner sep=0.75pt]    {$g$};

\end{tikzpicture} \qquad &\qquad \tikzset{every picture/.style={line width=0.75pt}} 

\begin{tikzpicture}[x=0.75pt,y=0.75pt,yscale=-1,xscale=1]

\draw  [color={rgb, 255:red, 0; green, 0; blue, 0 }  ,draw opacity=1 ][fill={rgb, 255:red, 208; green, 208; blue, 208 }  ,fill opacity=1 ][line width=0.75]  (202.78,95.18) .. controls (202.76,94.83) and (202.75,94.48) .. (202.75,94.13) .. controls (202.75,77.49) and (222.45,64) .. (246.75,64) .. controls (271.05,64) and (290.75,77.49) .. (290.75,94.13) .. controls (290.75,94.27) and (290.75,94.42) .. (290.75,94.57) -- (281.5,94.48) .. controls (281.5,94.36) and (281.5,94.24) .. (281.5,94.13) .. controls (281.5,82.59) and (265.94,73.25) .. (246.75,73.25) .. controls (227.56,73.25) and (212,82.59) .. (212,94.13) .. controls (212,94.4) and (212,94.68) .. (212.02,94.96) -- cycle ;
\draw  [color={rgb, 255:red, 0; green, 0; blue, 0 }  ,draw opacity=1 ][fill={rgb, 255:red, 128; green, 128; blue, 128 }  ,fill opacity=1 ][line width=0.75]  (196.45,99.92) .. controls (196.45,92.89) and (201.98,87.19) .. (208.79,87.19) .. controls (215.6,87.19) and (221.12,92.89) .. (221.12,99.92) .. controls (221.12,106.95) and (215.6,112.65) .. (208.79,112.65) .. controls (201.98,112.65) and (196.45,106.95) .. (196.45,99.92) -- cycle ;
\draw  [color={rgb, 255:red, 0; green, 0; blue, 0 }  ,draw opacity=1 ][fill={rgb, 255:red, 128; green, 128; blue, 128 }  ,fill opacity=1 ][line width=0.75]  (271.45,99.92) .. controls (271.45,92.89) and (276.98,87.19) .. (283.79,87.19) .. controls (290.6,87.19) and (296.12,92.89) .. (296.12,99.92) .. controls (296.12,106.95) and (290.6,112.65) .. (283.79,112.65) .. controls (276.98,112.65) and (271.45,106.95) .. (271.45,99.92) -- cycle ;

\draw (247,43.4) node [anchor=north west][inner sep=0.75pt]    {$f$};

\end{tikzpicture} \qquad &\qquad \tikzset{every picture/.style={line width=0.75pt}} 

\begin{tikzpicture}[x=0.75pt,y=0.75pt,yscale=-1,xscale=1]

\draw  [color={rgb, 255:red, 0; green, 0; blue, 0 }  ,draw opacity=1 ][fill={rgb, 255:red, 208; green, 208; blue, 208 }  ,fill opacity=1 ][line width=0.75]  (199.55,81.21) .. controls (197.83,92.87) and (187.45,101.73) .. (175.06,101.52) .. controls (161.61,101.3) and (150.89,90.47) .. (151.11,77.34) .. controls (151.33,64.21) and (162.41,53.75) .. (175.85,53.98) .. controls (185.95,54.15) and (194.5,60.29) .. (198.05,68.89) -- (190.16,71.99) .. controls (187.82,66.49) and (182.26,62.57) .. (175.71,62.46) .. controls (166.95,62.31) and (159.73,69.04) .. (159.59,77.48) .. controls (159.44,85.93) and (166.43,92.9) .. (175.2,93.04) .. controls (183.27,93.18) and (190.02,87.49) .. (191.16,80) -- cycle ;
\draw  [color={rgb, 255:red, 0; green, 0; blue, 0 }  ,draw opacity=1 ][fill={rgb, 255:red, 208; green, 208; blue, 208 }  ,fill opacity=1 ][line width=0.75]  (197.61,70.83) .. controls (200.22,59.33) and (211.24,51.28) .. (223.58,52.42) .. controls (236.97,53.67) and (246.84,65.27) .. (245.63,78.35) .. controls (244.42,91.42) and (232.58,101.02) .. (219.19,99.77) .. controls (209.14,98.84) and (201.07,92.07) .. (198.18,83.22) -- (206.29,80.73) .. controls (208.2,86.4) and (213.44,90.73) .. (219.97,91.33) .. controls (228.7,92.14) and (236.41,85.98) .. (237.19,77.57) .. controls (237.97,69.15) and (231.52,61.68) .. (222.79,60.87) .. controls (214.76,60.12) and (207.59,65.28) .. (205.89,72.67) -- cycle ;
\draw  [color={rgb, 255:red, 0; green, 0; blue, 0 }  ,draw opacity=1 ][fill={rgb, 255:red, 208; green, 208; blue, 208 }  ,fill opacity=1 ][line width=0.75]  (198.05,68.89) .. controls (183,69.66) and (170.94,79.7) .. (170.53,92.37) .. controls (170.09,105.89) and (183.07,117.28) .. (199.54,117.82) .. controls (216,118.36) and (229.7,107.83) .. (230.14,94.31) .. controls (230.49,83.54) and (222.31,74.11) .. (210.64,70.46) -- (207.02,78.49) .. controls (215.57,80.74) and (221.64,86.94) .. (221.41,94.03) .. controls (221.13,102.73) and (211.46,109.47) .. (199.82,109.09) .. controls (188.18,108.71) and (178.98,101.35) .. (179.26,92.65) .. controls (179.53,84.45) and (188.13,77.99) .. (198.87,77.6) -- cycle ;
\draw  [color={rgb, 255:red, 0; green, 0; blue, 0 }  ,draw opacity=1 ][fill={rgb, 255:red, 128; green, 128; blue, 128 }  ,fill opacity=1 ][line width=0.75]  (200.4,91.58) .. controls (192.13,90.62) and (186.19,83.24) .. (187.14,75.1) .. controls (188.08,66.95) and (195.55,61.12) .. (203.82,62.08) .. controls (212.09,63.04) and (218.03,70.43) .. (217.08,78.57) .. controls (216.14,86.72) and (208.67,92.54) .. (200.4,91.58) -- cycle ;

\draw (138.5,50.9) node [anchor=north west][inner sep=0.75pt]    {$e$};
\draw (251.25,56.4) node [anchor=north west][inner sep=0.75pt]    {$f$};
\draw (223.5,114.65) node [anchor=north west][inner sep=0.75pt]    {$g$};

\end{tikzpicture} \\
    $\bG$\qquad &\qquad $Q$ \qquad &\qquad $\bG^{f}$
    \end{tabular}
    \caption{A ribbon graph $\bG$, quasi-tree $Q\in\Q_\bG$ and partial dual $\bG^{E(Q)}$.}
    \label{f.ex}
\end{figure}

If $Q\in \Q_{\bG}$, then $\bG^{E(Q)}$ has exactly one vertex arising from the single boundary component of $Q$. For two edges $e$ and $e'$ of $\bG$, we say $e$ \emph{links} $e'$ with respect to $Q$ if they are interlaced in  $\bG^{E(Q)}$. For example, in Figure~\ref{f.ex}  the edge $g$ links both $e$ and $f$, but $e$ and $f$ are not linked with respect to the quasi-tree $Q$.

\begin{definition}
    Let $\bG$ be a connected ribbon graph with quasi-tree $Q\in \Q_{\bG}$ and let $\prec$ be a total order on the edges $E(\bG)$. With respect to $Q$, an edge $e\in E(\bG)$ is \emph{live} if it does not link a lower ordered edge, otherwise it is \emph{dead}. We say $e$ is \emph{non-orientable} with respect to $Q$ if it is a non-orientable loop in $\bG^{E(Q)}$, otherwise it is \emph{orientable}. The edges in $E(Q)$ are \emph{internal} and those in $E(\bG)-E(Q)$ are \emph{external}.
\end{definition}

Continuing our example, consider the ribbon graph $\bG$ and quasi-tree $Q\in\Q_\bG$ shown in Figure~\ref{f.ex}. If the edges are ordered $e\prec f \prec g$, then $e$ is externally live, $f$ is internally live, and $g$ is externally dead with respect to $Q$. All three edges are orientable.

This characterisation of activity for quasi-trees is due to Vignes-Tourneret~\cite{V-T} and is equivalent to the definition given in~\cite{Champan}.  For planar ribbon graphs, quasi-trees are always trees and edges are live (resp. dead) when they are active (resp. inactive) in the spanning tree. The choice of ordering $\prec$ can affect which edges are live or dead.

Observe that $b(\bG\ba A^c)=b(\bG^*\ba A)$ for any subset $A\subseteq E(\bG)$. 
So $Q\in\Q_\bG$ if and only if the spanning ribbon subgraph $\bG^*\ba E(Q)$ is a quasi-tree. This gives a natural one-to-one correspondence between $\Q_\bG$ and $\Q_{\bG^*}$. In~\cite[Lemma~4.1]{Butler}, Butler notes the following regarding duality and activity.

\begin{lemma}[\cite{Butler}]\label{l.but}
    Let $\bG$ be a connected ribbon graph with quasi-tree $Q\in\Q_\bG$ and let $Q'$ denote the quasi-tree $\bG^*\ba E(Q)$. If $e$ is live (resp. dead) with respect to $Q$, then $e$ is live (resp. dead) with respect to $Q'$. If $e$ is internal (resp. external) with respect to $Q$, then $e$ is external (resp. internal) with respect to $Q'$.
\end{lemma}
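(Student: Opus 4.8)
The plan is to reduce both assertions to purely combinatorial facts about the common edge set $E := E(\bG) = E(\bG^*)$, on which $\prec$ is a single fixed total order. Write $A := E(Q)$, so that by construction $E(Q') = E - A$, and the \emph{same} order $\prec$ governs activity for both $Q$ and $Q'$. Observe that the edges of $\bG$, $\bG^*$, and every partial dual are literally identified, so there is no ambiguity in speaking of a given edge $e$ relative to either quasi-tree.

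The internal/external statement is then immediate from the definitions. An edge is internal with respect to $Q$ exactly when it lies in $E(Q) = A$, while it is external with respect to $Q'$ exactly when it lies in $E(\bG^*) - E(Q') = E - (E - A) = A$. Hence $e$ is internal for $Q$ if and only if it is external for $Q'$, and symmetrically.

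The live/dead statement is the heart of the lemma, and I would deduce it from the single identity
\[(\bG^*)^{E(Q')} = \bG^{E(Q)}.\]
Granting this, two edges link with respect to $Q$ iff they are interlaced in $\bG^{E(Q)}$, and they link with respect to $Q'$ iff they are interlaced in $(\bG^*)^{E(Q')}$; but these are the \emph{same} ribbon graph, so the two linking relations on $E$ coincide. Since an edge is dead with respect to a quasi-tree precisely when it links some $\prec$-smaller edge, and both the linking relation and the order $\prec$ are unchanged in passing from $Q$ to $Q'$, it follows that $e$ is live (resp. dead) for $Q$ iff it is live (resp. dead) for $Q'$.

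It remains to establish the displayed identity, which I expect to be the main (and essentially only) obstacle. Using $\bG^* = \bG^{E}$, that partial duality may be carried out one subset at a time over disjoint edge sets (so $\bG^{E} = (\bG^{A})^{E - A}$), and that partial duality over a fixed set is an involution (each single-edge dual is a local involution, and the edgewise operations commute by Proposition~\ref{p.pd}(1), so $(\bH^{E-A})^{E-A} = \mathrm{id}$ for any $\bH$), I compute
\[(\bG^*)^{E - A} = \bigl((\bG^{A})^{E-A}\bigr)^{E-A} = \bG^{A},\]
applying the involution to $\bH = \bG^{A}$ in the last step. This yields $(\bG^*)^{E(Q')} = \bG^{E(Q)}$ and completes the argument. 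The only point requiring care is confirming the disjoint-union and involution behaviour of iterated partial duality from the properties recorded earlier; everything else is bookkeeping on edge sets.
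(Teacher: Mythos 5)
The paper does not prove this lemma itself --- it is imported verbatim from Butler~\cite[Lemma~4.1]{Butler} --- so there is no in-paper argument to compare against. Your proof is correct and self-contained: the internal/external swap is indeed pure set complementation, and the live/dead invariance reduces, exactly as you say, to the identity $(\bG^*)^{E(Q')}=\bG^{E(Q)}$, which follows from $\bG^*=\bG^{E}$ together with the commutativity and edgewise-involution properties of partial duality recorded in Proposition~\ref{p.pd} and the surrounding discussion; this is essentially the same partial-duality argument Butler uses.
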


Let $\bG$ be a connected ribbon graph with quasi-tree $\bQ\in \Q_{\bG}$ and let $\prec$ be a total order on the edges $E(\bG)$. We define the following notation. 

\begin{itemize}
    \item  $D_Q$ is the set of internally dead edges;
    \item  $D_Q^*$ is the set of externally dead edges;
    \item  $O_Q$ is the set of internally live orientable edges;
    \item  $O_Q^*$ is the set of externally live orientable edges;
    \item  $N_Q$ is the set of internally live non-orientable edges; and
    \item  $N_Q^*$ is the set of externally live non-orientable edges.
\end{itemize}
Our choice of notation is motivated by the fact that, 
using Lemma~\ref{l.but}, 
$D_Q^*=D_{Q'}$, 
$O_Q^*=O_{Q'}$,
$N_Q^*=N_{Q'}$,
where $Q'=\bG^*\ba E(Q)$.

\subsection{Packaged ribbon graphs}\label{ss.pack}
Packaged ribbon graphs were introduced in~\cite{maya1} building upon the coloured ribbon graphs of Huggett and Moffatt~\cite{HM}.
They consist of a ribbon graph equipped with a partition on the set of vertices and the set of boundary components, where each block in a partition is weighted. Ribbon graphs are known for their ability to describe graphs cellularly embedded in surfaces (see, e.g., \cite{graphsonsurfaces,zbMATH01624430}). Analogously, packaged ribbon graphs describe graphs embedded in pseudo-surfaces equipped with vertex and face weights (details given in~\cite[Section~2.3]{maya1}).
\begin{definition}
   A  \emph{packaged ribbon graph} is a tuple $\pG=(\bG,\V,\wV,\B,\wB)$ where: 
    \begin{enumerate}
        \item $\bG=(V,E)$ is a ribbon graph and $B$ is the set of boundary components in $\bG$;
        \item $\V$ is a partition of $V$, and $\B$ is a partition of $B$;
        \item $\wV:\V\to \mathbb{N}_0$ is a \emph{weighting} on the blocks of $\V$, and $\wB:\B\to\mathbb{N}_0$ is a \emph{weighting} on the blocks of $\B$.
    \end{enumerate}
\end{definition}

For each vertex $v\in V$, we denote its block in the partition $\V$ by $[v]_\V$. Similarly, for each boundary component $b\in B$, we denote its block in the partition $\B$ by $[b]_\B$. At times, when there is no potential for confusion, we omit  subscripts and write $[v]$ for $[v]_{\V}$, and $[b]$ for $[b]_{\B}$.  (Note that in this paper the term ``block" will always refer to the block of a partition and never a block of a graph.)

A packaged ribbon graph consists of a ``coloured ribbon graph'' as defined by Huggett and Moffatt~\cite[Definition~11]{HM} in which the vertex colours and boundary component colours are weighted. In practice, it can be easier to visualise the partitions $\V$ and $\B$ as colourings on the vertices and boundary components respectively. In which case, the block weightings become colour weightings.
For this reason, the figures in this paper will use ribbon graphs with weighted vertex and boundary component colourings to depict packaged ribbon graphs. The weightings $\wV$ and $\wB$ will be represented by a coloured tuple where each integer corresponds to the weight function on that colour. An example can be found in Figure~\ref{Packaging}.

\medskip
Deletion and contraction are defined for packaged ribbon graphs following their definitions for coloured ribbon graphs from~\cite[Definition~13]{HM} and specifying how the weights change, as below.
 The reader may find it helpful to consult Tables~\ref{Del} and~\ref{Con} while reading the definitions.

\begin{definition}
    Let $\pG=(\bG,\V,\wV,\B,\wB)$ be a packaged ribbon graph, $B$ be its set of boundary components, and $e$ be an edge in $\bG$. We use $\pG\ba e$ to denote the packaged ribbon graph $(\bG\ba e,\V',\wV',\B',\wB')$ obtained by \emph{deleting} $e$, and define it as follows. Recalling $\bG$ and $\bG\ba e$ have the same vertices,  we set $\V':=\V$ and $\wV':=\wV$. For the boundary components, $\B'$ and $\wB'$ are defined as follows.
    \begin{enumerate}
        \item If, in $\bG$,  the edge $e$ intersects two boundary components $a,b\in B$ and $[a]\neq [b]$, then delete $e$ from $\bG$ and let $a'$ denote the boundary component formed by the deletion. Obtain the partition $\B'$ from $\B$ by removing both $[a]$ and $[b]$, and inserting $[a']:=[a]\cup[b]\cup \{a'\}-\{a,b\}$. The other blocks are unchanged. Define 
        \[\wB'([x]):=
        \begin{cases}
        \wB([a])+\wB([b]) & \text{ if } [x]=[a'],\\
        \wB([x]) & \text{ if } [x]\neq [a'].
        \end{cases}\]
        
        \item If, in $\bG$,  the edge $e$ intersects two boundary components $a,b\in B$ and $[a]=[b]$, then delete $e$ from $\bG$ and let $a'$ denote the boundary component formed by the deletion. Obtain the partition $\B'$ from $\B$ by replacing $[a]$ with $[a']:=[a]\cup\{a'\}- \{a, b\}$. The other blocks are unchanged. Define
        \[\wB'([x]):=
        \begin{cases}
        \wB([x])+1 & \text{ if } [x]=[a'],\\
        \wB([x]) & \text{ if } [x]\neq [a'].
        \end{cases}\]
       
        \item If, in $\bG$, the edge $e$ intersects one boundary component $a\in B$ twice,  and $b(\bG\ba e)=b(\bG)+1$, then delete $e$ from $\bG$ and let $a'$, $b'$ denote the two boundary components formed by the deletion. Obtain the partition $\B'$ from $\B$ by replacing $[a]$ with $[a']:=[a]\cup\{a',b'\}- \{a\}$. The other blocks are unchanged. Define 
        \[\wB'([x]):=
        \begin{cases}
        \wB([x])+1 & \text{ if } [x]=[a'],\\
        \wB([x]) & \text{ if } [x]\neq [a'].
        \end{cases}\]
        
         \item If, in $\bG$, the edge $e$ intersects one boundary component $a\in B$ twice, and $b(\bG\ba e)=b(\bG)$, then delete $e$ from $\bG$ and let $a'$ denote the boundary component formed by the deletion. Obtain the partition $\B'$ from $\B$ by replacing $[a]$ with $[a']:=[a]\cup\{a'\}- \{a\}$. The other blocks are unchanged. Define 
        \[\wB'([x]):=
        \begin{cases}
        \wB([x])+1 & \text{ if } [x]=[a'],\\
        \wB([x]) & \text{ if } [x]\neq [a'].
        \end{cases}\]
    \end{enumerate}
    
\end{definition}

\begin{table}[!t]
    \centering
    \begin{tabular}{| c | c |}
        \hline
        $\pG$ & $\pG\ba e$ \\
        \hline
          \tikzset{every picture/.style={line width=0.75pt}} 

\begin{tikzpicture}[x=0.45pt,y=0.45pt,yscale=-1,xscale=1]

\draw  [draw opacity=0][fill={rgb, 255:red, 208; green, 208; blue, 208 }  ,fill opacity=1 ] (227.07,70.5) -- (270.74,100.75) -- (279.75,87.75) -- (236.08,57.5) -- cycle ;
\draw  [draw opacity=0][fill={rgb, 255:red, 208; green, 208; blue, 208 }  ,fill opacity=1 ] (227.07,44.49) -- (270.74,14.24) -- (279.75,27.25) -- (236.08,57.5) -- cycle ;
\draw    (243.45,62.9) -- (279.75,87.75) ;
\draw    (279.75,27.25) -- (243.85,51.8) ;
\draw  [draw opacity=0][fill={rgb, 255:red, 208; green, 208; blue, 208 }  ,fill opacity=1 ] (81.75,77) -- (38.08,107.25) -- (29.07,94.25) -- (72.74,64) -- cycle ;
\draw  [draw opacity=0][fill={rgb, 255:red, 208; green, 208; blue, 208 }  ,fill opacity=1 ] (81.75,50.99) -- (38.08,20.74) -- (29.07,33.75) -- (72.74,64) -- cycle ;
\draw    (65.37,69.4) -- (29.07,94.25) ;
\draw    (29.07,33.75) -- (64.97,58.3) ;
\draw  [fill={rgb, 255:red, 208; green, 208; blue, 208 }  ,fill opacity=1 ] (85.25,48.41) -- (230.5,48.41) -- (230.5,69.41) -- (85.25,69.41) -- cycle ;
\draw  [fill={rgb, 255:red, 128; green, 128; blue, 128 }  ,fill opacity=1 ] (61,60.5) .. controls (61,46.69) and (72.19,35.5) .. (86,35.5) .. controls (99.81,35.5) and (111,46.69) .. (111,60.5) .. controls (111,74.31) and (99.81,85.5) .. (86,85.5) .. controls (72.19,85.5) and (61,74.31) .. (61,60.5) -- cycle ;
\draw  [fill={rgb, 255:red, 128; green, 128; blue, 128 }  ,fill opacity=1 ] (201,56.75) .. controls (201,42.94) and (212.19,31.75) .. (226,31.75) .. controls (239.81,31.75) and (251,42.94) .. (251,56.75) .. controls (251,70.56) and (239.81,81.75) .. (226,81.75) .. controls (212.19,81.75) and (201,70.56) .. (201,56.75) -- cycle ;
\draw [color={rgb, 255:red, 223; green, 83; blue, 107 }  ,draw opacity=1 ][line width=2.25]    (107.9,49.41) -- (202.9,49.41) ;
\draw [color={rgb, 255:red, 40; green, 226; blue, 229 }  ,draw opacity=1 ][line width=2.25]    (108.7,70.21) -- (205.25,70.41) ;
\draw [color={rgb, 255:red, 223; green, 83; blue, 107 }  ,draw opacity=1 ][line width=2.25]    (38.08,20.74) -- (69,43.16) ;
\draw [color={rgb, 255:red, 40; green, 226; blue, 229 }  ,draw opacity=1 ][line width=2.25]    (238.03,77.18) -- (270.74,100.75) ;
\draw [color={rgb, 255:red, 40; green, 226; blue, 229 }  ,draw opacity=1 ][line width=2.25]    (71.62,82.2) -- (38.08,107.25) ;
\draw  [draw opacity=0][line width=2.25]  (109.57,68.85) .. controls (106.13,78.55) and (96.88,85.5) .. (86,85.5) .. controls (80.44,85.5) and (75.3,83.69) .. (71.15,80.62) -- (86,60.5) -- cycle ; \draw  [color={rgb, 255:red, 40; green, 226; blue, 229 }  ,draw opacity=1 ][line width=2.25]  (109.57,68.85) .. controls (106.13,78.55) and (96.88,85.5) .. (86,85.5) .. controls (80.44,85.5) and (75.3,83.69) .. (71.15,80.62) ;  
\draw  [draw opacity=0][line width=2.25]  (67.5,43.68) .. controls (72.07,38.66) and (78.67,35.5) .. (86,35.5) .. controls (96.27,35.5) and (105.09,41.69) .. (108.94,50.55) -- (86,60.5) -- cycle ; \draw  [color={rgb, 255:red, 223; green, 83; blue, 107 }  ,draw opacity=1 ][line width=2.25]  (67.5,43.68) .. controls (72.07,38.66) and (78.67,35.5) .. (86,35.5) .. controls (96.27,35.5) and (105.09,41.69) .. (108.94,50.55) ;  
\draw  [draw opacity=0][line width=2.25]  (201.68,50.95) .. controls (204.29,39.94) and (214.19,31.75) .. (226,31.75) .. controls (231,31.75) and (235.66,33.22) .. (239.57,35.75) -- (226,56.75) -- cycle ; \draw  [color={rgb, 255:red, 223; green, 83; blue, 107 }  ,draw opacity=1 ][line width=2.25]  (201.68,50.95) .. controls (204.29,39.94) and (214.19,31.75) .. (226,31.75) .. controls (231,31.75) and (235.66,33.22) .. (239.57,35.75) ;  
\draw [color={rgb, 255:red, 223; green, 83; blue, 107 }  ,draw opacity=1 ][line width=2.25]  [dash pattern={on 6.75pt off 4.5pt}]  (38.08,20.74) .. controls (74.23,1.49) and (210.25,-1.09) .. (270.74,14.24) ;
\draw  [draw opacity=0][line width=2.25]  (240.24,77.3) .. controls (236.2,80.1) and (231.29,81.75) .. (226,81.75) .. controls (216.45,81.75) and (208.15,76.4) .. (203.94,68.53) -- (226,56.75) -- cycle ; \draw  [color={rgb, 255:red, 40; green, 226; blue, 229 }  ,draw opacity=1 ][line width=2.25]  (240.24,77.3) .. controls (236.2,80.1) and (231.29,81.75) .. (226,81.75) .. controls (216.45,81.75) and (208.15,76.4) .. (203.94,68.53) ;  
\draw [color={rgb, 255:red, 223; green, 83; blue, 107 }  ,draw opacity=1 ][line width=2.25]    (270.74,14.24) -- (242.11,34.94) -- (238.82,37) ;
\draw [color={rgb, 255:red, 40; green, 226; blue, 229 }  ,draw opacity=1 ][line width=2.25]  [dash pattern={on 6.75pt off 4.5pt}]  (38.08,107.25) .. controls (74.23,126.5) and (240.75,124.41) .. (270.74,100.75) ;

\draw (3,15) node [anchor=north west][inner sep=0.75pt]    {$\textcolor[rgb]{0.87,0.33,0.42}{n}$};
\draw (1,95) node [anchor=north west][inner sep=0.75pt]    {$\textcolor[rgb]{0.31,0.89,0.76}{m}$};

\end{tikzpicture} & \tikzset{every picture/.style={line width=0.75pt}} 

\begin{tikzpicture}[x=0.45pt,y=0.45pt,yscale=-1,xscale=1]

\draw  [draw opacity=0][fill={rgb, 255:red, 208; green, 208; blue, 208 }  ,fill opacity=1 ] (84.83,78.75) -- (41.16,109) -- (32.16,96) -- (75.83,65.75) -- cycle ;
\draw    (68.45,71.15) -- (32.16,96) ;
\draw [color={rgb, 255:red, 205; green, 11; blue, 188 }  ,draw opacity=1 ][line width=2.25]    (75.32,84.98) -- (41.16,109) ;
\draw [color={rgb, 255:red, 205; green, 11; blue, 188 }  ,draw opacity=1 ][line width=2.25]  [dash pattern={on 6.75pt off 4.5pt}]  (41.16,109) .. controls (77.31,128.25) and (234.06,127.17) .. (275.16,108.08) ;
\draw  [draw opacity=0][fill={rgb, 255:red, 208; green, 208; blue, 208 }  ,fill opacity=1 ] (84.83,52.74) -- (41.16,22.49) -- (32.16,35.5) -- (75.83,65.75) -- cycle ;
\draw    (32.16,35.5) -- (68.05,60.05) ;
\draw [color={rgb, 255:red, 205; green, 11; blue, 188 }  ,draw opacity=1 ][line width=2.25]    (41.16,22.49) -- (73.67,44.65) ;
\draw [color={rgb, 255:red, 205; green, 11; blue, 188 }  ,draw opacity=1 ][line width=2.25]  [dash pattern={on 6.75pt off 4.5pt}]  (41.16,22.49) .. controls (77.31,3.24) and (234.06,2.49) .. (275.16,21.58) ;
\draw  [draw opacity=0][fill={rgb, 255:red, 208; green, 208; blue, 208 }  ,fill opacity=1 ] (231.49,77.83) -- (275.16,108.08) -- (284.17,95.08) -- (240.5,64.83) -- cycle ;
\draw  [draw opacity=0][fill={rgb, 255:red, 208; green, 208; blue, 208 }  ,fill opacity=1 ] (231.49,51.83) -- (275.16,21.58) -- (284.17,34.58) -- (240.5,64.83) -- cycle ;
\draw    (247.87,70.23) -- (284.17,95.08) ;
\draw    (284.17,34.58) -- (248.27,59.13) ;
\draw  [fill={rgb, 255:red, 128; green, 128; blue, 128 }  ,fill opacity=1 ] (63.92,64.08) .. controls (63.92,50.28) and (75.11,39.08) .. (88.92,39.08) .. controls (102.72,39.08) and (113.92,50.28) .. (113.92,64.08) .. controls (113.92,77.89) and (102.72,89.08) .. (88.92,89.08) .. controls (75.11,89.08) and (63.92,77.89) .. (63.92,64.08) -- cycle ;
\draw  [fill={rgb, 255:red, 128; green, 128; blue, 128 }  ,fill opacity=1 ] (203.92,63.08) .. controls (203.92,49.28) and (215.11,38.08) .. (228.92,38.08) .. controls (242.72,38.08) and (253.92,49.28) .. (253.92,63.08) .. controls (253.92,76.89) and (242.72,88.08) .. (228.92,88.08) .. controls (215.11,88.08) and (203.92,76.89) .. (203.92,63.08) -- cycle ;
\draw [color={rgb, 255:red, 205; green, 11; blue, 188 }  ,draw opacity=1 ][line width=2.25]    (240.67,84.58) -- (275.16,108.08) ;
\draw  [draw opacity=0][line width=2.25]  (72.06,45.62) .. controls (76.51,41.56) and (82.42,39.08) .. (88.92,39.08) .. controls (102.72,39.08) and (113.92,50.28) .. (113.92,64.08) .. controls (113.92,77.89) and (102.72,89.08) .. (88.92,89.08) .. controls (83.16,89.08) and (77.85,87.14) .. (73.63,83.86) -- (88.92,64.08) -- cycle ; \draw  [color={rgb, 255:red, 205; green, 11; blue, 188 }  ,draw opacity=1 ][line width=2.25]  (72.06,45.62) .. controls (76.51,41.56) and (82.42,39.08) .. (88.92,39.08) .. controls (102.72,39.08) and (113.92,50.28) .. (113.92,64.08) .. controls (113.92,77.89) and (102.72,89.08) .. (88.92,89.08) .. controls (83.16,89.08) and (77.85,87.14) .. (73.63,83.86) ;  
\draw  [draw opacity=0][line width=2.25]  (242.23,84.25) .. controls (238.37,86.68) and (233.81,88.08) .. (228.92,88.08) .. controls (215.11,88.08) and (203.92,76.89) .. (203.92,63.08) .. controls (203.92,49.28) and (215.11,38.08) .. (228.92,38.08) .. controls (234.34,38.08) and (239.37,39.81) .. (243.46,42.75) -- (228.92,63.08) -- cycle ; \draw  [color={rgb, 255:red, 205; green, 11; blue, 188 }  ,draw opacity=1 ][line width=2.25]  (242.23,84.25) .. controls (238.37,86.68) and (233.81,88.08) .. (228.92,88.08) .. controls (215.11,88.08) and (203.92,76.89) .. (203.92,63.08) .. controls (203.92,49.28) and (215.11,38.08) .. (228.92,38.08) .. controls (234.34,38.08) and (239.37,39.81) .. (243.46,42.75) ;  
\draw [color={rgb, 255:red, 205; green, 11; blue, 188 }  ,draw opacity=1 ][line width=2.25]    (275.16,21.58) -- (242.42,43.58) ;

\draw (122,55) node [anchor=north west][inner sep=0.75pt]    {$\textcolor[rgb]{0.8,0.04,0.74}{n+m}$};

\end{tikzpicture} \\
        \hline
        \tikzset{every picture/.style={line width=0.75pt}} 

\begin{tikzpicture}[x=0.45pt,y=0.45pt,yscale=-1,xscale=1]

\draw  [draw opacity=0][fill={rgb, 255:red, 208; green, 208; blue, 208 }  ,fill opacity=1 ] (228.57,71.5) -- (272.24,101.75) -- (281.25,88.75) -- (237.58,58.5) -- cycle ;
\draw  [draw opacity=0][fill={rgb, 255:red, 208; green, 208; blue, 208 }  ,fill opacity=1 ] (228.57,45.49) -- (272.24,15.24) -- (281.25,28.25) -- (237.58,58.5) -- cycle ;
\draw    (244.95,63.9) -- (281.25,88.75) ;
\draw    (281.25,28.25) -- (245.35,52.8) ;
\draw  [draw opacity=0][fill={rgb, 255:red, 208; green, 208; blue, 208 }  ,fill opacity=1 ] (83.25,78) -- (39.58,108.25) -- (30.57,95.25) -- (74.24,65) -- cycle ;
\draw  [draw opacity=0][fill={rgb, 255:red, 208; green, 208; blue, 208 }  ,fill opacity=1 ] (83.25,51.99) -- (39.58,21.74) -- (30.57,34.75) -- (74.24,65) -- cycle ;
\draw    (66.87,70.4) -- (30.57,95.25) ;
\draw    (30.57,34.75) -- (66.47,59.3) ;
\draw  [fill={rgb, 255:red, 208; green, 208; blue, 208 }  ,fill opacity=1 ] (86.75,49.41) -- (232,49.41) -- (232,70.41) -- (86.75,70.41) -- cycle ;
\draw  [fill={rgb, 255:red, 128; green, 128; blue, 128 }  ,fill opacity=1 ] (62.5,61.5) .. controls (62.5,47.69) and (73.69,36.5) .. (87.5,36.5) .. controls (101.31,36.5) and (112.5,47.69) .. (112.5,61.5) .. controls (112.5,75.31) and (101.31,86.5) .. (87.5,86.5) .. controls (73.69,86.5) and (62.5,75.31) .. (62.5,61.5) -- cycle ;
\draw  [fill={rgb, 255:red, 128; green, 128; blue, 128 }  ,fill opacity=1 ] (202.5,57.75) .. controls (202.5,43.94) and (213.69,32.75) .. (227.5,32.75) .. controls (241.31,32.75) and (252.5,43.94) .. (252.5,57.75) .. controls (252.5,71.56) and (241.31,82.75) .. (227.5,82.75) .. controls (213.69,82.75) and (202.5,71.56) .. (202.5,57.75) -- cycle ;
\draw [color={rgb, 255:red, 223; green, 83; blue, 107 }  ,draw opacity=1 ][line width=2.25]    (109.4,50.41) -- (204.4,50.41) ;
\draw [color={rgb, 255:red, 223; green, 83; blue, 107 }  ,draw opacity=1 ][line width=2.25]    (110.2,71.21) -- (206.5,71.16) ;
\draw [color={rgb, 255:red, 223; green, 83; blue, 107 }  ,draw opacity=1 ][line width=2.25]    (39.58,21.74) -- (70.5,44.16) ;
\draw [color={rgb, 255:red, 223; green, 83; blue, 107 }  ,draw opacity=1 ][line width=2.25]    (239.53,78.18) -- (272.24,101.75) ;
\draw [color={rgb, 255:red, 223; green, 83; blue, 107 }  ,draw opacity=1 ][line width=2.25]    (73.12,83.2) -- (39.58,108.25) ;
\draw  [draw opacity=0][line width=2.25]  (111.07,69.85) .. controls (107.63,79.55) and (98.38,86.5) .. (87.5,86.5) .. controls (81.94,86.5) and (76.8,84.69) .. (72.65,81.62) -- (87.5,61.5) -- cycle ; \draw  [color={rgb, 255:red, 223; green, 83; blue, 107 }  ,draw opacity=1 ][line width=2.25]  (111.07,69.85) .. controls (107.63,79.55) and (98.38,86.5) .. (87.5,86.5) .. controls (81.94,86.5) and (76.8,84.69) .. (72.65,81.62) ;  
\draw  [draw opacity=0][line width=2.25]  (69,44.68) .. controls (73.57,39.66) and (80.17,36.5) .. (87.5,36.5) .. controls (97.77,36.5) and (106.59,42.69) .. (110.44,51.55) -- (87.5,61.5) -- cycle ; \draw  [color={rgb, 255:red, 223; green, 83; blue, 107 }  ,draw opacity=1 ][line width=2.25]  (69,44.68) .. controls (73.57,39.66) and (80.17,36.5) .. (87.5,36.5) .. controls (97.77,36.5) and (106.59,42.69) .. (110.44,51.55) ;  
\draw  [draw opacity=0][line width=2.25]  (203.29,51.51) .. controls (206.06,40.72) and (215.85,32.75) .. (227.5,32.75) .. controls (232.5,32.75) and (237.16,34.22) .. (241.07,36.75) -- (227.5,57.75) -- cycle ; \draw  [color={rgb, 255:red, 223; green, 83; blue, 107 }  ,draw opacity=1 ][line width=2.25]  (203.29,51.51) .. controls (206.06,40.72) and (215.85,32.75) .. (227.5,32.75) .. controls (232.5,32.75) and (237.16,34.22) .. (241.07,36.75) ;  
\draw [color={rgb, 255:red, 223; green, 83; blue, 107 }  ,draw opacity=1 ][line width=2.25]  [dash pattern={on 6.75pt off 4.5pt}]  (39.58,21.74) .. controls (75.73,2.49) and (211.75,-0.09) .. (272.24,15.24) ;
\draw  [draw opacity=0][line width=2.25]  (241.74,78.3) .. controls (237.7,81.1) and (232.79,82.75) .. (227.5,82.75) .. controls (218.15,82.75) and (210.01,77.62) .. (205.72,70.02) -- (227.5,57.75) -- cycle ; \draw  [color={rgb, 255:red, 223; green, 83; blue, 107 }  ,draw opacity=1 ][line width=2.25]  (241.74,78.3) .. controls (237.7,81.1) and (232.79,82.75) .. (227.5,82.75) .. controls (218.15,82.75) and (210.01,77.62) .. (205.72,70.02) ;  
\draw [color={rgb, 255:red, 223; green, 83; blue, 107 }  ,draw opacity=1 ][line width=2.25]    (272.24,15.24) -- (243.61,35.94) -- (240.32,38) ;
\draw [color={rgb, 255:red, 223; green, 83; blue, 107 }  ,draw opacity=1 ][line width=2.25]  [dash pattern={on 6.75pt off 4.5pt}]  (39.58,108.25) .. controls (75.73,127.5) and (242.25,125.41) .. (272.24,101.75) ;

\draw (13,55.9) node [anchor=north west][inner sep=0.75pt]    {$\textcolor[rgb]{0.87,0.33,0.42}{n}$};

\end{tikzpicture} & \tikzset{every picture/.style={line width=0.75pt}} 

\begin{tikzpicture}[x=0.45pt,y=0.45pt,yscale=-1,xscale=1]

\draw  [draw opacity=0][fill={rgb, 255:red, 208; green, 208; blue, 208 }  ,fill opacity=1 ] (83.17,75.58) -- (39.5,105.83) -- (30.49,92.83) -- (74.16,62.58) -- cycle ;
\draw    (66.78,67.98) -- (30.49,92.83) ;
\draw [color={rgb, 255:red, 223; green, 83; blue, 107 }  ,draw opacity=1 ][line width=2.25]    (73.65,81.82) -- (39.5,105.83) ;
\draw [color={rgb, 255:red, 223; green, 83; blue, 107 }  ,draw opacity=1 ][line width=2.25]  [dash pattern={on 6.75pt off 4.5pt}]  (39.5,105.83) .. controls (75.65,125.08) and (232.39,124) .. (273.49,104.92) ;
\draw  [draw opacity=0][fill={rgb, 255:red, 208; green, 208; blue, 208 }  ,fill opacity=1 ] (83.17,49.58) -- (39.5,19.33) -- (30.49,32.33) -- (74.16,62.58) -- cycle ;
\draw    (30.49,32.33) -- (66.38,56.88) ;
\draw [color={rgb, 255:red, 223; green, 83; blue, 107 }  ,draw opacity=1 ][line width=2.25]    (39.5,19.33) -- (72,41.48) ;
\draw [color={rgb, 255:red, 223; green, 83; blue, 107 }  ,draw opacity=1 ][line width=2.25]  [dash pattern={on 6.75pt off 4.5pt}]  (39.5,19.33) .. controls (75.65,0.08) and (232.39,-0.68) .. (273.49,18.41) ;
\draw  [draw opacity=0][fill={rgb, 255:red, 208; green, 208; blue, 208 }  ,fill opacity=1 ] (229.82,74.67) -- (273.49,104.92) -- (282.5,91.91) -- (238.83,61.66) -- cycle ;
\draw  [draw opacity=0][fill={rgb, 255:red, 208; green, 208; blue, 208 }  ,fill opacity=1 ] (229.82,48.66) -- (273.49,18.41) -- (282.5,31.41) -- (238.83,61.66) -- cycle ;
\draw    (246.2,67.06) -- (282.5,91.91) ;
\draw    (282.5,31.41) -- (246.6,55.96) ;
\draw  [fill={rgb, 255:red, 128; green, 128; blue, 128 }  ,fill opacity=1 ] (62.25,60.92) .. controls (62.25,47.11) and (73.44,35.92) .. (87.25,35.92) .. controls (101.06,35.92) and (112.25,47.11) .. (112.25,60.92) .. controls (112.25,74.72) and (101.06,85.92) .. (87.25,85.92) .. controls (73.44,85.92) and (62.25,74.72) .. (62.25,60.92) -- cycle ;
\draw  [fill={rgb, 255:red, 128; green, 128; blue, 128 }  ,fill opacity=1 ] (202.25,59.92) .. controls (202.25,46.11) and (213.44,34.92) .. (227.25,34.92) .. controls (241.06,34.92) and (252.25,46.11) .. (252.25,59.92) .. controls (252.25,73.72) and (241.06,84.92) .. (227.25,84.92) .. controls (213.44,84.92) and (202.25,73.72) .. (202.25,59.92) -- cycle ;
\draw [color={rgb, 255:red, 223; green, 83; blue, 107 }  ,draw opacity=1 ][line width=2.25]    (239,81.41) -- (273.49,104.92) ;
\draw  [draw opacity=0][line width=2.25]  (70.4,42.45) .. controls (74.84,38.39) and (80.76,35.92) .. (87.25,35.92) .. controls (101.06,35.92) and (112.25,47.11) .. (112.25,60.92) .. controls (112.25,74.72) and (101.06,85.92) .. (87.25,85.92) .. controls (81.49,85.92) and (76.19,83.97) .. (71.96,80.7) -- (87.25,60.92) -- cycle ; \draw  [color={rgb, 255:red, 223; green, 83; blue, 107 }  ,draw opacity=1 ][line width=2.25]  (70.4,42.45) .. controls (74.84,38.39) and (80.76,35.92) .. (87.25,35.92) .. controls (101.06,35.92) and (112.25,47.11) .. (112.25,60.92) .. controls (112.25,74.72) and (101.06,85.92) .. (87.25,85.92) .. controls (81.49,85.92) and (76.19,83.97) .. (71.96,80.7) ;  
\draw  [draw opacity=0][line width=2.25]  (240.56,81.08) .. controls (236.71,83.51) and (232.14,84.92) .. (227.25,84.92) .. controls (213.44,84.92) and (202.25,73.72) .. (202.25,59.92) .. controls (202.25,46.11) and (213.44,34.92) .. (227.25,34.92) .. controls (232.68,34.92) and (237.7,36.65) .. (241.8,39.58) -- (227.25,59.92) -- cycle ; \draw  [color={rgb, 255:red, 223; green, 83; blue, 107 }  ,draw opacity=1 ][line width=2.25]  (240.56,81.08) .. controls (236.71,83.51) and (232.14,84.92) .. (227.25,84.92) .. controls (213.44,84.92) and (202.25,73.72) .. (202.25,59.92) .. controls (202.25,46.11) and (213.44,34.92) .. (227.25,34.92) .. controls (232.68,34.92) and (237.7,36.65) .. (241.8,39.58) ;  
\draw [color={rgb, 255:red, 223; green, 83; blue, 107 }  ,draw opacity=1 ][line width=2.25]    (273.49,18.41) -- (240.75,40.41) ;

\draw (124,55) node [anchor=north west][inner sep=0.75pt]    {$\textcolor[rgb]{0.87,0.33,0.42}{n+1}$};

\end{tikzpicture} \\
        \hline
        \tikzset{every picture/.style={line width=0.75pt}} 

\begin{tikzpicture}[x=0.45pt,y=0.45pt,yscale=-1,xscale=1]

\draw [color={rgb, 255:red, 223; green, 83; blue, 107 }  ,draw opacity=1 ][line width=2.25]  [dash pattern={on 6.75pt off 4.5pt}]  (37.58,13.99) .. controls (-2.42,24.44) and (-2.08,88.62) .. (37.58,100.5) ;
\draw  [draw opacity=0][fill={rgb, 255:red, 208; green, 208; blue, 208 }  ,fill opacity=1 ] (226.57,63.75) -- (270.24,94) -- (279.25,81) -- (235.58,50.75) -- cycle ;
\draw  [draw opacity=0][fill={rgb, 255:red, 208; green, 208; blue, 208 }  ,fill opacity=1 ] (226.57,37.74) -- (270.24,7.49) -- (279.25,20.5) -- (235.58,50.75) -- cycle ;
\draw    (242.95,56.15) -- (279.25,81) ;
\draw    (279.25,20.5) -- (243.35,45.05) ;
\draw  [draw opacity=0][fill={rgb, 255:red, 208; green, 208; blue, 208 }  ,fill opacity=1 ] (81.25,70.25) -- (37.58,100.5) -- (28.57,87.5) -- (72.24,57.25) -- cycle ;
\draw  [draw opacity=0][fill={rgb, 255:red, 208; green, 208; blue, 208 }  ,fill opacity=1 ] (81.25,44.24) -- (37.58,13.99) -- (28.57,27) -- (72.24,57.25) -- cycle ;
\draw    (64.87,62.65) -- (28.57,87.5) ;
\draw    (28.57,27) -- (64.47,51.55) ;
\draw  [fill={rgb, 255:red, 208; green, 208; blue, 208 }  ,fill opacity=1 ] (84.75,41.66) -- (230,41.66) -- (230,62.66) -- (84.75,62.66) -- cycle ;
\draw  [fill={rgb, 255:red, 128; green, 128; blue, 128 }  ,fill opacity=1 ] (60.5,53.75) .. controls (60.5,39.94) and (71.69,28.75) .. (85.5,28.75) .. controls (99.31,28.75) and (110.5,39.94) .. (110.5,53.75) .. controls (110.5,67.56) and (99.31,78.75) .. (85.5,78.75) .. controls (71.69,78.75) and (60.5,67.56) .. (60.5,53.75) -- cycle ;
\draw  [fill={rgb, 255:red, 128; green, 128; blue, 128 }  ,fill opacity=1 ] (200.5,50) .. controls (200.5,36.19) and (211.69,25) .. (225.5,25) .. controls (239.31,25) and (250.5,36.19) .. (250.5,50) .. controls (250.5,63.81) and (239.31,75) .. (225.5,75) .. controls (211.69,75) and (200.5,63.81) .. (200.5,50) -- cycle ;
\draw [color={rgb, 255:red, 223; green, 83; blue, 107 }  ,draw opacity=1 ][line width=2.25]    (107.4,42.66) -- (202.4,42.66) ;
\draw [color={rgb, 255:red, 223; green, 83; blue, 107 }  ,draw opacity=1 ][line width=2.25]    (108.2,63.46) -- (204.5,63.41) ;
\draw [color={rgb, 255:red, 223; green, 83; blue, 107 }  ,draw opacity=1 ][line width=2.25]    (37.58,13.99) -- (68.5,36.41) ;
\draw [color={rgb, 255:red, 223; green, 83; blue, 107 }  ,draw opacity=1 ][line width=2.25]    (237.53,70.43) -- (270.24,94) ;
\draw [color={rgb, 255:red, 223; green, 83; blue, 107 }  ,draw opacity=1 ][line width=2.25]    (71.12,75.45) -- (37.58,100.5) ;
\draw  [draw opacity=0][line width=2.25]  (109.07,62.1) .. controls (105.63,71.8) and (96.38,78.75) .. (85.5,78.75) .. controls (79.94,78.75) and (74.8,76.94) .. (70.65,73.87) -- (85.5,53.75) -- cycle ; \draw  [color={rgb, 255:red, 223; green, 83; blue, 107 }  ,draw opacity=1 ][line width=2.25]  (109.07,62.1) .. controls (105.63,71.8) and (96.38,78.75) .. (85.5,78.75) .. controls (79.94,78.75) and (74.8,76.94) .. (70.65,73.87) ;  
\draw  [draw opacity=0][line width=2.25]  (67,36.93) .. controls (71.57,31.91) and (78.17,28.75) .. (85.5,28.75) .. controls (95.77,28.75) and (104.59,34.94) .. (108.44,43.8) -- (85.5,53.75) -- cycle ; \draw  [color={rgb, 255:red, 223; green, 83; blue, 107 }  ,draw opacity=1 ][line width=2.25]  (67,36.93) .. controls (71.57,31.91) and (78.17,28.75) .. (85.5,28.75) .. controls (95.77,28.75) and (104.59,34.94) .. (108.44,43.8) ;  
\draw  [draw opacity=0][line width=2.25]  (201.29,43.76) .. controls (204.06,32.97) and (213.85,25) .. (225.5,25) .. controls (230.5,25) and (235.16,26.47) .. (239.07,29) -- (225.5,50) -- cycle ; \draw  [color={rgb, 255:red, 223; green, 83; blue, 107 }  ,draw opacity=1 ][line width=2.25]  (201.29,43.76) .. controls (204.06,32.97) and (213.85,25) .. (225.5,25) .. controls (230.5,25) and (235.16,26.47) .. (239.07,29) ;  
\draw  [draw opacity=0][line width=2.25]  (239.74,70.55) .. controls (235.7,73.35) and (230.79,75) .. (225.5,75) .. controls (216.15,75) and (208.01,69.87) .. (203.72,62.27) -- (225.5,50) -- cycle ; \draw  [color={rgb, 255:red, 223; green, 83; blue, 107 }  ,draw opacity=1 ][line width=2.25]  (239.74,70.55) .. controls (235.7,73.35) and (230.79,75) .. (225.5,75) .. controls (216.15,75) and (208.01,69.87) .. (203.72,62.27) ;  
\draw [color={rgb, 255:red, 223; green, 83; blue, 107 }  ,draw opacity=1 ][line width=2.25]    (270.24,7.49) -- (241.61,28.19) -- (238.32,30.25) ;
\draw [color={rgb, 255:red, 223; green, 83; blue, 107 }  ,draw opacity=1 ][line width=2.25]  [dash pattern={on 6.75pt off 4.5pt}]  (270.24,7.49) .. controls (310.24,17.94) and (309.9,82.12) .. (270.24,94) ;

\draw (149,79) node [anchor=north west][inner sep=0.75pt]    {$\textcolor[rgb]{0.87,0.33,0.42}{n}$};

\end{tikzpicture} & \tikzset{every picture/.style={line width=0.75pt}} 

\begin{tikzpicture}[x=0.45pt,y=0.45pt,yscale=-1,xscale=1]

\draw [color={rgb, 255:red, 223; green, 83; blue, 107 }  ,draw opacity=1 ][line width=2.25]  [dash pattern={on 6.75pt off 4.5pt}]  (37.5,13) .. controls (-2.5,23.44) and (-2.16,87.63) .. (37.5,99.51) ;
\draw [color={rgb, 255:red, 223; green, 83; blue, 107 }  ,draw opacity=1 ][line width=2.25]  [dash pattern={on 6.75pt off 4.5pt}]  (271.49,12.08) .. controls (311.49,22.53) and (311.15,86.71) .. (271.49,98.59) ;
\draw  [draw opacity=0][fill={rgb, 255:red, 208; green, 208; blue, 208 }  ,fill opacity=1 ] (81.17,69.26) -- (37.5,99.51) -- (28.49,86.5) -- (72.16,56.25) -- cycle ;
\draw    (64.78,61.65) -- (28.49,86.5) ;
\draw [color={rgb, 255:red, 223; green, 83; blue, 107 }  ,draw opacity=1 ][line width=2.25]    (71.65,75.49) -- (37.5,99.51) ;
\draw  [draw opacity=0][fill={rgb, 255:red, 208; green, 208; blue, 208 }  ,fill opacity=1 ] (81.17,43.25) -- (37.5,13) -- (28.49,26) -- (72.16,56.25) -- cycle ;
\draw    (28.49,26) -- (64.38,50.55) ;
\draw [color={rgb, 255:red, 223; green, 83; blue, 107 }  ,draw opacity=1 ][line width=2.25]    (37.5,13) -- (70,35.15) ;
\draw  [draw opacity=0][fill={rgb, 255:red, 208; green, 208; blue, 208 }  ,fill opacity=1 ] (227.82,68.34) -- (271.49,98.59) -- (280.5,85.59) -- (236.83,55.34) -- cycle ;
\draw  [draw opacity=0][fill={rgb, 255:red, 208; green, 208; blue, 208 }  ,fill opacity=1 ] (227.82,42.33) -- (271.49,12.08) -- (280.5,25.09) -- (236.83,55.34) -- cycle ;
\draw    (244.2,60.74) -- (280.5,85.59) ;
\draw    (280.5,25.09) -- (244.6,49.64) ;
\draw  [fill={rgb, 255:red, 128; green, 128; blue, 128 }  ,fill opacity=1 ] (60.25,54.59) .. controls (60.25,40.78) and (71.44,29.59) .. (85.25,29.59) .. controls (99.06,29.59) and (110.25,40.78) .. (110.25,54.59) .. controls (110.25,68.4) and (99.06,79.59) .. (85.25,79.59) .. controls (71.44,79.59) and (60.25,68.4) .. (60.25,54.59) -- cycle ;
\draw  [fill={rgb, 255:red, 128; green, 128; blue, 128 }  ,fill opacity=1 ] (200.25,53.59) .. controls (200.25,39.78) and (211.44,28.59) .. (225.25,28.59) .. controls (239.06,28.59) and (250.25,39.78) .. (250.25,53.59) .. controls (250.25,67.4) and (239.06,78.59) .. (225.25,78.59) .. controls (211.44,78.59) and (200.25,67.4) .. (200.25,53.59) -- cycle ;
\draw [color={rgb, 255:red, 223; green, 83; blue, 107 }  ,draw opacity=1 ][line width=2.25]    (237,75.09) -- (271.49,98.59) ;
\draw  [draw opacity=0][line width=2.25]  (68.4,36.13) .. controls (72.84,32.07) and (78.76,29.59) .. (85.25,29.59) .. controls (99.06,29.59) and (110.25,40.78) .. (110.25,54.59) .. controls (110.25,68.4) and (99.06,79.59) .. (85.25,79.59) .. controls (79.49,79.59) and (74.19,77.64) .. (69.96,74.37) -- (85.25,54.59) -- cycle ; \draw  [color={rgb, 255:red, 223; green, 83; blue, 107 }  ,draw opacity=1 ][line width=2.25]  (68.4,36.13) .. controls (72.84,32.07) and (78.76,29.59) .. (85.25,29.59) .. controls (99.06,29.59) and (110.25,40.78) .. (110.25,54.59) .. controls (110.25,68.4) and (99.06,79.59) .. (85.25,79.59) .. controls (79.49,79.59) and (74.19,77.64) .. (69.96,74.37) ;  
\draw  [draw opacity=0][line width=2.25]  (238.56,74.76) .. controls (234.71,77.19) and (230.14,78.59) .. (225.25,78.59) .. controls (211.44,78.59) and (200.25,67.4) .. (200.25,53.59) .. controls (200.25,39.78) and (211.44,28.59) .. (225.25,28.59) .. controls (230.68,28.59) and (235.7,30.32) .. (239.8,33.26) -- (225.25,53.59) -- cycle ; \draw  [color={rgb, 255:red, 223; green, 83; blue, 107 }  ,draw opacity=1 ][line width=2.25]  (238.56,74.76) .. controls (234.71,77.19) and (230.14,78.59) .. (225.25,78.59) .. controls (211.44,78.59) and (200.25,67.4) .. (200.25,53.59) .. controls (200.25,39.78) and (211.44,28.59) .. (225.25,28.59) .. controls (230.68,28.59) and (235.7,30.32) .. (239.8,33.26) ;  
\draw [color={rgb, 255:red, 223; green, 83; blue, 107 }  ,draw opacity=1 ][line width=2.25]    (271.49,12.08) -- (238.75,34.09) ;

\draw (124,50) node [anchor=north west][inner sep=0.75pt]    {$\textcolor[rgb]{0.87,0.33,0.42}{n+1}$};

\end{tikzpicture} \\
        \hline
        \tikzset{every picture/.style={line width=0.75pt}} 

\begin{tikzpicture}[x=0.45pt,y=0.45pt,yscale=-1,xscale=1]

\draw [color={rgb, 255:red, 223; green, 83; blue, 107 }  ,draw opacity=1 ][line width=2.25]  [dash pattern={on 6.75pt off 4.5pt}]  (38.58,16.99) .. controls (-13.5,7.25) and (-0.5,112.25) .. (42.5,141.25) .. controls (85.5,170.25) and (244.5,131.25) .. (271.24,97) ;
\draw  [draw opacity=0][fill={rgb, 255:red, 208; green, 208; blue, 208 }  ,fill opacity=1 ] (227.57,66.75) -- (271.24,97) -- (280.25,84) -- (236.58,53.75) -- cycle ;
\draw  [draw opacity=0][fill={rgb, 255:red, 208; green, 208; blue, 208 }  ,fill opacity=1 ] (227.57,40.74) -- (271.24,10.49) -- (280.25,23.5) -- (236.58,53.75) -- cycle ;
\draw    (243.95,59.15) -- (280.25,84) ;
\draw    (280.25,23.5) -- (244.35,48.05) ;
\draw  [draw opacity=0][fill={rgb, 255:red, 208; green, 208; blue, 208 }  ,fill opacity=1 ] (82.25,73.25) -- (38.58,103.5) -- (29.57,90.5) -- (73.24,60.25) -- cycle ;
\draw  [draw opacity=0][fill={rgb, 255:red, 208; green, 208; blue, 208 }  ,fill opacity=1 ] (82.25,47.24) -- (38.58,16.99) -- (29.57,30) -- (73.24,60.25) -- cycle ;
\draw    (65.87,65.65) -- (29.57,90.5) ;
\draw    (29.57,30) -- (65.47,54.55) ;
\draw  [fill={rgb, 255:red, 208; green, 208; blue, 208 }  ,fill opacity=1 ] (85.75,44.66) -- (231,44.66) -- (231,65.66) -- (85.75,65.66) -- cycle ;
\draw  [fill={rgb, 255:red, 128; green, 128; blue, 128 }  ,fill opacity=1 ] (61.5,56.75) .. controls (61.5,42.94) and (72.69,31.75) .. (86.5,31.75) .. controls (100.31,31.75) and (111.5,42.94) .. (111.5,56.75) .. controls (111.5,70.56) and (100.31,81.75) .. (86.5,81.75) .. controls (72.69,81.75) and (61.5,70.56) .. (61.5,56.75) -- cycle ;
\draw  [fill={rgb, 255:red, 128; green, 128; blue, 128 }  ,fill opacity=1 ] (201.5,53) .. controls (201.5,39.19) and (212.69,28) .. (226.5,28) .. controls (240.31,28) and (251.5,39.19) .. (251.5,53) .. controls (251.5,66.81) and (240.31,78) .. (226.5,78) .. controls (212.69,78) and (201.5,66.81) .. (201.5,53) -- cycle ;
\draw [color={rgb, 255:red, 223; green, 83; blue, 107 }  ,draw opacity=1 ][line width=2.25]    (108.4,45.66) -- (203.4,45.66) ;
\draw [color={rgb, 255:red, 223; green, 83; blue, 107 }  ,draw opacity=1 ][line width=2.25]    (109.2,66.46) -- (205.5,66.41) ;
\draw [color={rgb, 255:red, 223; green, 83; blue, 107 }  ,draw opacity=1 ][line width=2.25]    (38.58,16.99) -- (69.5,39.41) ;
\draw [color={rgb, 255:red, 223; green, 83; blue, 107 }  ,draw opacity=1 ][line width=2.25]    (238.53,73.43) -- (271.24,97) ;
\draw [color={rgb, 255:red, 223; green, 83; blue, 107 }  ,draw opacity=1 ][line width=2.25]    (72.12,78.45) -- (38.58,103.5) ;
\draw  [draw opacity=0][line width=2.25]  (110.07,65.1) .. controls (106.63,74.8) and (97.38,81.75) .. (86.5,81.75) .. controls (80.94,81.75) and (75.8,79.94) .. (71.65,76.87) -- (86.5,56.75) -- cycle ; \draw  [color={rgb, 255:red, 223; green, 83; blue, 107 }  ,draw opacity=1 ][line width=2.25]  (110.07,65.1) .. controls (106.63,74.8) and (97.38,81.75) .. (86.5,81.75) .. controls (80.94,81.75) and (75.8,79.94) .. (71.65,76.87) ;  
\draw  [draw opacity=0][line width=2.25]  (68,39.93) .. controls (72.57,34.91) and (79.17,31.75) .. (86.5,31.75) .. controls (96.77,31.75) and (105.59,37.94) .. (109.44,46.8) -- (86.5,56.75) -- cycle ; \draw  [color={rgb, 255:red, 223; green, 83; blue, 107 }  ,draw opacity=1 ][line width=2.25]  (68,39.93) .. controls (72.57,34.91) and (79.17,31.75) .. (86.5,31.75) .. controls (96.77,31.75) and (105.59,37.94) .. (109.44,46.8) ;  
\draw  [draw opacity=0][line width=2.25]  (202.29,46.76) .. controls (205.06,35.97) and (214.85,28) .. (226.5,28) .. controls (231.5,28) and (236.16,29.47) .. (240.07,32) -- (226.5,53) -- cycle ; \draw  [color={rgb, 255:red, 223; green, 83; blue, 107 }  ,draw opacity=1 ][line width=2.25]  (202.29,46.76) .. controls (205.06,35.97) and (214.85,28) .. (226.5,28) .. controls (231.5,28) and (236.16,29.47) .. (240.07,32) ;  
\draw  [draw opacity=0][line width=2.25]  (240.74,73.55) .. controls (236.7,76.35) and (231.79,78) .. (226.5,78) .. controls (217.15,78) and (209.01,72.87) .. (204.72,65.27) -- (226.5,53) -- cycle ; \draw  [color={rgb, 255:red, 223; green, 83; blue, 107 }  ,draw opacity=1 ][line width=2.25]  (240.74,73.55) .. controls (236.7,76.35) and (231.79,78) .. (226.5,78) .. controls (217.15,78) and (209.01,72.87) .. (204.72,65.27) ;  
\draw [color={rgb, 255:red, 223; green, 83; blue, 107 }  ,draw opacity=1 ][line width=2.25]    (271.24,10.49) -- (242.61,31.19) -- (239.32,33.25) ;
\draw [color={rgb, 255:red, 223; green, 83; blue, 107 }  ,draw opacity=1 ][line width=2.25]  [dash pattern={on 6.75pt off 4.5pt}]  (271.24,10.49) .. controls (323.5,-6.75) and (312.5,112.25) .. (269.5,138.25) .. controls (226.5,164.25) and (96.5,136.25) .. (38.58,103.5) ;

\draw (149,79) node [anchor=north west][inner sep=0.75pt]    {$\textcolor[rgb]{0.87,0.33,0.42}{n}$};

\end{tikzpicture} & \tikzset{every picture/.style={line width=0.75pt}} 

\begin{tikzpicture}[x=0.45pt,y=0.45pt,yscale=-1,xscale=1]

\draw [color={rgb, 255:red, 223; green, 83; blue, 107 }  ,draw opacity=1 ][line width=2.25]  [dash pattern={on 6.75pt off 4.5pt}]  (38.58,16.99) .. controls (-13.5,7.25) and (-0.5,112.25) .. (42.5,141.25) .. controls (85.5,170.25) and (244.5,131.25) .. (271.24,97) ;
\draw  [draw opacity=0][fill={rgb, 255:red, 208; green, 208; blue, 208 }  ,fill opacity=1 ] (227.57,66.75) -- (271.24,97) -- (280.25,84) -- (236.58,53.75) -- cycle ;
\draw  [draw opacity=0][fill={rgb, 255:red, 208; green, 208; blue, 208 }  ,fill opacity=1 ] (227.57,40.74) -- (271.24,10.49) -- (280.25,23.5) -- (236.58,53.75) -- cycle ;
\draw    (243.95,59.15) -- (280.25,84) ;
\draw    (280.25,23.5) -- (244.35,48.05) ;
\draw  [draw opacity=0][fill={rgb, 255:red, 208; green, 208; blue, 208 }  ,fill opacity=1 ] (82.25,73.25) -- (38.58,103.5) -- (29.57,90.5) -- (73.24,60.25) -- cycle ;
\draw  [draw opacity=0][fill={rgb, 255:red, 208; green, 208; blue, 208 }  ,fill opacity=1 ] (82.25,47.24) -- (38.58,16.99) -- (29.57,30) -- (73.24,60.25) -- cycle ;
\draw    (65.87,65.65) -- (29.57,90.5) ;
\draw    (29.57,30) -- (65.47,54.55) ;
\draw  [fill={rgb, 255:red, 128; green, 128; blue, 128 }  ,fill opacity=1 ] (61.5,56.75) .. controls (61.5,42.94) and (72.69,31.75) .. (86.5,31.75) .. controls (100.31,31.75) and (111.5,42.94) .. (111.5,56.75) .. controls (111.5,70.56) and (100.31,81.75) .. (86.5,81.75) .. controls (72.69,81.75) and (61.5,70.56) .. (61.5,56.75) -- cycle ;
\draw  [fill={rgb, 255:red, 128; green, 128; blue, 128 }  ,fill opacity=1 ] (201.5,53) .. controls (201.5,39.19) and (212.69,28) .. (226.5,28) .. controls (240.31,28) and (251.5,39.19) .. (251.5,53) .. controls (251.5,66.81) and (240.31,78) .. (226.5,78) .. controls (212.69,78) and (201.5,66.81) .. (201.5,53) -- cycle ;
\draw [color={rgb, 255:red, 223; green, 83; blue, 107 }  ,draw opacity=1 ][line width=2.25]    (38.58,16.99) -- (69.5,39.41) ;
\draw [color={rgb, 255:red, 223; green, 83; blue, 107 }  ,draw opacity=1 ][line width=2.25]    (238.53,73.43) -- (271.24,97) ;
\draw [color={rgb, 255:red, 223; green, 83; blue, 107 }  ,draw opacity=1 ][line width=2.25]    (72.12,78.45) -- (38.58,103.5) ;
\draw  [draw opacity=0][line width=2.25]  (68,39.93) .. controls (72.57,34.91) and (79.17,31.75) .. (86.5,31.75) .. controls (100.31,31.75) and (111.5,42.94) .. (111.5,56.75) .. controls (111.5,70.56) and (100.31,81.75) .. (86.5,81.75) .. controls (80.85,81.75) and (75.65,79.88) .. (71.46,76.72) -- (86.5,56.75) -- cycle ; \draw  [color={rgb, 255:red, 223; green, 83; blue, 107 }  ,draw opacity=1 ][line width=2.25]  (68,39.93) .. controls (72.57,34.91) and (79.17,31.75) .. (86.5,31.75) .. controls (100.31,31.75) and (111.5,42.94) .. (111.5,56.75) .. controls (111.5,70.56) and (100.31,81.75) .. (86.5,81.75) .. controls (80.85,81.75) and (75.65,79.88) .. (71.46,76.72) ;  
\draw  [draw opacity=0][line width=2.25]  (240.74,73.55) .. controls (236.7,76.35) and (231.79,78) .. (226.5,78) .. controls (212.69,78) and (201.5,66.81) .. (201.5,53) .. controls (201.5,39.19) and (212.69,28) .. (226.5,28) .. controls (231.78,28) and (236.67,29.63) .. (240.7,32.42) -- (226.5,53) -- cycle ; \draw  [color={rgb, 255:red, 223; green, 83; blue, 107 }  ,draw opacity=1 ][line width=2.25]  (240.74,73.55) .. controls (236.7,76.35) and (231.79,78) .. (226.5,78) .. controls (212.69,78) and (201.5,66.81) .. (201.5,53) .. controls (201.5,39.19) and (212.69,28) .. (226.5,28) .. controls (231.78,28) and (236.67,29.63) .. (240.7,32.42) ;  
\draw [color={rgb, 255:red, 223; green, 83; blue, 107 }  ,draw opacity=1 ][line width=2.25]    (271.24,10.49) -- (242.61,31.19) -- (239.32,33.25) ;
\draw [color={rgb, 255:red, 223; green, 83; blue, 107 }  ,draw opacity=1 ][line width=2.25]  [dash pattern={on 6.75pt off 4.5pt}]  (271.24,10.49) .. controls (323.5,-6.75) and (312.5,112.25) .. (269.5,138.25) .. controls (226.5,164.25) and (96.5,136.25) .. (38.58,103.5) ;

\draw (124,55) node [anchor=north west][inner sep=0.75pt]    {$\textcolor[rgb]{0.87,0.33,0.42}{n+1}$};

\end{tikzpicture} \\
        \hline
    \end{tabular}
    \caption{Deleting an edge $e$ in a packaged ribbon graph $\pG$.}
    \label{Del}
\end{table}

\begin{definition}
    Let $\pG=(\bG,\V,\wV,\B,\wB)$ be a packaged ribbon graph, $V$ be its vertex set, and $e$ be an edge in $\bG$. We use $\pG/e$ to denote the packaged ribbon graph $(\bG/e,\V',\wV',\B',\wB')$ obtained by \emph{contracting} $e$ and define it as follows. There is a natural correspondence between the boundary components of $\bG$ and those of $\bG/e$. This correspondence induces a partition $\B'$ on the boundary components of $\bG/e$ and a weighting $\wB'$. For the vertices, $\V'$ and $\wV'$ are defined as follows.
    \begin{enumerate} 
        
        \item If $e$ is a non-loop edge incident to $u,v\in V$ and $[u]\neq [v]$, then contract $e$ in $\bG$ and let $u'$ denote the vertex formed by the contraction. Obtain the partition $\V'$ from $\V$ by removing $[u]$ and $[v]$, and inserting $[u']:=[u]\cup [v]\cup\{u'\}-\{u,v\}$. The other blocks are unchanged. Define
        \[\wV'([x]):=
        \begin{cases}
        \wV([u])+\wV([v]) & \text{ if } [x]=[u'],\\
        \wV([x]) & \text{ if } [x]\neq [u'].
        \end{cases}\]

        \item If $e$ is a non-loop edge incident to $u,v\in V$ and $[u]=[v]$, then contract $e$ in $\bG$ and let $u'$ denote the vertex formed by the contraction. Obtain the partition $\V'$ from $\V$ by replacing $[u]$ with $[u']:=[u]\cup \{u'\}- \{u,v\}$. The other blocks are unchanged. Define
        \[\wV'([x]):=
        \begin{cases}
        \wV([x])+1 & \text{ if } [x]=[u'],\\
        \wV([x]) & \text{ if } [x]\neq [u'].
        \end{cases}\]

        \item If $e$ is an orientable loop incident to $u\in V$,         
        then contract $e$ in $\bG$ and let $u'$, $v'$ denote the two vertices formed by the contraction. Obtain the partition $\V'$ from $\V$ by replacing $[u]$ with $[u']:=[u]\cup \{u',v'\}- \{u\}$. The other blocks are unchanged. Define
        \[\wV'([x]):=
        \begin{cases}
        \wV([x])+1 & \text{ if } [x]=[u'],\\
        \wV([x]) & \text{ if } [x]\neq [u'].
        \end{cases}\]

         \item If $e$ is a non-orientable loop incident to $u\in V$, then contract $e$ in $\bG$ and let $u'$ denote the vertex formed by the contraction.  Obtain the partition $\V'$ from $\V$ by replacing $[u]$ with $[u']:=[u]\cup \{u'\}- \{u\}$. The other blocks are unchanged. Define
        \[\wV'([x]):=
        \begin{cases}
        \wV([x])+1 & \text{ if } [x]=[u'],\\
        \wV([x]) & \text{ if } [x]\neq [u'].
        \end{cases}\]
    \end{enumerate}
    
\end{definition}

\begin{table}[!t]
    \centering
    \begin{tabular}{| c | c |}
        \hline
        $\pG$ & $\pG/e$ \\
        \hline
          \tikzset{every picture/.style={line width=0.75pt}} 

\begin{tikzpicture}[x=0.45pt,y=0.45pt,yscale=-1,xscale=1]

\draw  [draw opacity=0][fill={rgb, 255:red, 208; green, 208; blue, 208 }  ,fill opacity=1 ] (76.75,45.59) -- (23.64,47.04) -- (24.07,62.85) -- (77.18,61.4) -- cycle ;
\draw  [draw opacity=0][fill={rgb, 255:red, 208; green, 208; blue, 208 }  ,fill opacity=1 ] (87.82,66.54) -- (50.87,104.71) -- (39.5,93.71) -- (76.45,55.54) -- cycle ;
\draw  [draw opacity=0][fill={rgb, 255:red, 208; green, 208; blue, 208 }  ,fill opacity=1 ] (89.12,38.83) -- (48.34,4.77) -- (38.2,16.91) -- (78.98,50.97) -- cycle ;
\draw    (81.39,73.37) -- (50.87,104.71) ;
\draw    (70.27,62.27) -- (39.5,93.71) ;
\draw    (48.34,4.77) -- (80.41,32.07) ;
\draw    (38.2,16.91) -- (71.75,44.59) ;
\draw  [draw opacity=0][fill={rgb, 255:red, 208; green, 208; blue, 208 }  ,fill opacity=1 ] (233.72,68.62) -- (277.39,98.87) -- (286.4,85.86) -- (242.73,55.61) -- cycle ;
\draw  [draw opacity=0][fill={rgb, 255:red, 208; green, 208; blue, 208 }  ,fill opacity=1 ] (233.72,42.61) -- (277.39,12.36) -- (286.4,25.36) -- (242.73,55.61) -- cycle ;
\draw  [fill={rgb, 255:red, 208; green, 208; blue, 208 }  ,fill opacity=1 ] (81.25,44.91) -- (226.5,44.91) -- (226.5,65.91) -- (81.25,65.91) -- cycle ;
\draw  [fill={rgb, 255:red, 80; green, 227; blue, 194 }  ,fill opacity=1 ] (201,55) .. controls (201,41.19) and (212.19,30) .. (226,30) .. controls (239.81,30) and (251,41.19) .. (251,55) .. controls (251,68.81) and (239.81,80) .. (226,80) .. controls (212.19,80) and (201,68.81) .. (201,55) -- cycle ;
\draw    (107.9,44.91) -- (202.9,44.91) ;
\draw    (108.7,65.71) -- (203.7,65.71) ;
\draw    (241.35,74.06) -- (277.39,98.87) ;
\draw    (250.1,61.01) -- (286.4,85.86) ;
\draw    (277.39,12.36) -- (243,36.66) ;
\draw    (286.4,25.36) -- (250.5,49.91) ;
\draw    (23.64,47.04) -- (76.75,45.59) ;
\draw    (24.07,62.85) -- (77.18,61.4) ;
\draw  [fill={rgb, 255:red, 223; green, 83; blue, 107 }  ,fill opacity=1 ] (61,57) .. controls (61,43.19) and (72.19,32) .. (86,32) .. controls (99.81,32) and (111,43.19) .. (111,57) .. controls (111,70.81) and (99.81,82) .. (86,82) .. controls (72.19,82) and (61,70.81) .. (61,57) -- cycle ;

\draw (88,85.4) node [anchor=north west][inner sep=0.75pt]    {$\textcolor[rgb]{0.87,0.33,0.42}{n}$};
\draw (214,85.4) node [anchor=north west][inner sep=0.75pt]    {$\textcolor[rgb]{0.31,0.89,0.76}{m}$};

\end{tikzpicture} & \tikzset{every picture/.style={line width=0.75pt}} 

\begin{tikzpicture}[x=0.45pt,y=0.45pt,yscale=-1,xscale=1]

\draw  [draw opacity=0][fill={rgb, 255:red, 208; green, 208; blue, 208 }  ,fill opacity=1 ] (136.75,49.88) -- (83.64,51.33) -- (84.07,67.14) -- (137.18,65.69) -- cycle ;
\draw  [draw opacity=0][fill={rgb, 255:red, 208; green, 208; blue, 208 }  ,fill opacity=1 ] (147.82,70.83) -- (110.87,109) -- (99.5,98) -- (136.45,59.83) -- cycle ;
\draw  [draw opacity=0][fill={rgb, 255:red, 208; green, 208; blue, 208 }  ,fill opacity=1 ] (149.12,43.12) -- (108.34,9.06) -- (98.2,21.2) -- (138.98,55.26) -- cycle ;
\draw    (141.39,77.66) -- (110.87,109) ;
\draw    (130.27,66.56) -- (99.5,98) ;
\draw    (108.34,9.06) -- (140.41,36.36) ;
\draw    (98.2,21.2) -- (131.75,48.88) ;
\draw    (83.64,51.33) -- (136.75,49.88) ;
\draw    (84.07,67.14) -- (137.18,65.69) ;
\draw  [draw opacity=0][fill={rgb, 255:red, 208; green, 208; blue, 208 }  ,fill opacity=1 ] (159.82,69.25) -- (203.49,99.5) -- (212.5,86.5) -- (168.83,56.25) -- cycle ;
\draw  [draw opacity=0][fill={rgb, 255:red, 208; green, 208; blue, 208 }  ,fill opacity=1 ] (159.82,43.24) -- (203.49,12.99) -- (212.5,26) -- (168.83,56.25) -- cycle ;
\draw    (167.45,74.7) -- (203.49,99.5) ;
\draw    (176.2,61.65) -- (212.5,86.5) ;
\draw    (203.49,12.99) -- (169.1,37.3) ;
\draw    (212.5,26) -- (176.6,50.55) ;
\draw  [fill={rgb, 255:red, 205; green, 11; blue, 188 }  ,fill opacity=1 ] (124.7,55.06) .. controls (124.7,40.51) and (136.5,28.71) .. (151.05,28.71) .. controls (165.6,28.71) and (177.4,40.51) .. (177.4,55.06) .. controls (177.4,69.62) and (165.6,81.41) .. (151.05,81.41) .. controls (136.5,81.41) and (124.7,69.62) .. (124.7,55.06) -- cycle ;

\draw (128,95) node [anchor=north west][inner sep=0.75pt]    {$\textcolor[rgb]{0.8,0.04,0.74}{n+m}$};

\end{tikzpicture} \\
        \hline
        \tikzset{every picture/.style={line width=0.75pt}} 

\begin{tikzpicture}[x=0.45pt,y=0.45pt,yscale=-1,xscale=1]

\draw  [draw opacity=0][fill={rgb, 255:red, 208; green, 208; blue, 208 }  ,fill opacity=1 ] (76.75,45.59) -- (23.64,47.04) -- (24.07,62.85) -- (77.18,61.4) -- cycle ;
\draw  [draw opacity=0][fill={rgb, 255:red, 208; green, 208; blue, 208 }  ,fill opacity=1 ] (87.82,66.54) -- (50.87,104.71) -- (39.5,93.71) -- (76.45,55.54) -- cycle ;
\draw  [draw opacity=0][fill={rgb, 255:red, 208; green, 208; blue, 208 }  ,fill opacity=1 ] (89.12,38.83) -- (48.34,4.77) -- (38.2,16.91) -- (78.98,50.97) -- cycle ;
\draw    (81.39,73.37) -- (50.87,104.71) ;
\draw    (70.27,62.27) -- (39.5,93.71) ;
\draw    (48.34,4.77) -- (80.41,32.07) ;
\draw    (38.2,16.91) -- (71.75,44.59) ;
\draw  [draw opacity=0][fill={rgb, 255:red, 208; green, 208; blue, 208 }  ,fill opacity=1 ] (233.72,68.62) -- (277.39,98.87) -- (286.4,85.86) -- (242.73,55.61) -- cycle ;
\draw  [draw opacity=0][fill={rgb, 255:red, 208; green, 208; blue, 208 }  ,fill opacity=1 ] (233.72,42.61) -- (277.39,12.36) -- (286.4,25.36) -- (242.73,55.61) -- cycle ;
\draw  [fill={rgb, 255:red, 208; green, 208; blue, 208 }  ,fill opacity=1 ] (81.25,44.91) -- (226.5,44.91) -- (226.5,65.91) -- (81.25,65.91) -- cycle ;
\draw  [fill={rgb, 255:red, 223; green, 83; blue, 107 }  ,fill opacity=1 ] (201,55) .. controls (201,41.19) and (212.19,30) .. (226,30) .. controls (239.81,30) and (251,41.19) .. (251,55) .. controls (251,68.81) and (239.81,80) .. (226,80) .. controls (212.19,80) and (201,68.81) .. (201,55) -- cycle ;
\draw    (107.9,44.91) -- (202.9,44.91) ;
\draw    (108.7,65.71) -- (203.7,65.71) ;
\draw    (241.35,74.06) -- (277.39,98.87) ;
\draw    (250.1,61.01) -- (286.4,85.86) ;
\draw    (277.39,12.36) -- (243,36.66) ;
\draw    (286.4,25.36) -- (250.5,49.91) ;
\draw    (23.64,47.04) -- (76.75,45.59) ;
\draw    (24.07,62.85) -- (77.18,61.4) ;
\draw  [fill={rgb, 255:red, 223; green, 83; blue, 107 }  ,fill opacity=1 ] (61,56) .. controls (61,42.19) and (72.19,31) .. (86,31) .. controls (99.81,31) and (111,42.19) .. (111,56) .. controls (111,69.81) and (99.81,81) .. (86,81) .. controls (72.19,81) and (61,69.81) .. (61,56) -- cycle ;

\draw (147,79.4) node [anchor=north west][inner sep=0.75pt]    {$\textcolor[rgb]{0.87,0.33,0.42}{n}$};

\end{tikzpicture} & \tikzset{every picture/.style={line width=0.75pt}} 

\begin{tikzpicture}[x=0.45pt,y=0.45pt,yscale=-1,xscale=1]

\draw  [draw opacity=0][fill={rgb, 255:red, 208; green, 208; blue, 208 }  ,fill opacity=1 ] (136.75,49.88) -- (83.64,51.33) -- (84.07,67.14) -- (137.18,65.69) -- cycle ;
\draw  [draw opacity=0][fill={rgb, 255:red, 208; green, 208; blue, 208 }  ,fill opacity=1 ] (147.82,70.83) -- (110.87,109) -- (99.5,98) -- (136.45,59.83) -- cycle ;
\draw  [draw opacity=0][fill={rgb, 255:red, 208; green, 208; blue, 208 }  ,fill opacity=1 ] (149.12,43.12) -- (108.34,9.06) -- (98.2,21.2) -- (138.98,55.26) -- cycle ;
\draw    (141.39,77.66) -- (110.87,109) ;
\draw    (130.27,66.56) -- (99.5,98) ;
\draw    (108.34,9.06) -- (140.41,36.36) ;
\draw    (98.2,21.2) -- (131.75,48.88) ;
\draw    (83.64,51.33) -- (136.75,49.88) ;
\draw    (84.07,67.14) -- (137.18,65.69) ;
\draw  [draw opacity=0][fill={rgb, 255:red, 208; green, 208; blue, 208 }  ,fill opacity=1 ] (159.82,69.25) -- (203.49,99.5) -- (212.5,86.5) -- (168.83,56.25) -- cycle ;
\draw  [draw opacity=0][fill={rgb, 255:red, 208; green, 208; blue, 208 }  ,fill opacity=1 ] (159.82,43.24) -- (203.49,12.99) -- (212.5,26) -- (168.83,56.25) -- cycle ;
\draw    (167.45,74.7) -- (203.49,99.5) ;
\draw    (176.2,61.65) -- (212.5,86.5) ;
\draw    (203.49,12.99) -- (169.1,37.3) ;
\draw    (212.5,26) -- (176.6,50.55) ;
\draw  [fill={rgb, 255:red, 223; green, 83; blue, 107 }  ,fill opacity=1 ] (124.7,55.06) .. controls (124.7,40.51) and (136.5,28.71) .. (151.05,28.71) .. controls (165.6,28.71) and (177.4,40.51) .. (177.4,55.06) .. controls (177.4,69.62) and (165.6,81.41) .. (151.05,81.41) .. controls (136.5,81.41) and (124.7,69.62) .. (124.7,55.06) -- cycle ;

\draw (128,95) node [anchor=north west][inner sep=0.75pt]    {$\textcolor[rgb]{0.87,0.33,0.42}{n+1}$};

\end{tikzpicture} \\
        \hline
        \tikzset{every picture/.style={line width=0.75pt}} 

\begin{tikzpicture}[x=0.45pt,y=0.45pt,yscale=-1,xscale=1]

\draw  [draw opacity=0][fill={rgb, 255:red, 208; green, 208; blue, 208 }  ,fill opacity=1 ] (136.75,49.88) -- (83.64,51.33) -- (84.07,67.14) -- (137.18,65.69) -- cycle ;
\draw  [draw opacity=0][fill={rgb, 255:red, 208; green, 208; blue, 208 }  ,fill opacity=1 ] (147.82,70.83) -- (110.87,109) -- (99.5,98) -- (136.45,59.83) -- cycle ;
\draw  [draw opacity=0][fill={rgb, 255:red, 208; green, 208; blue, 208 }  ,fill opacity=1 ] (149.12,43.12) -- (108.34,9.06) -- (98.2,21.2) -- (138.98,55.26) -- cycle ;
\draw    (141.39,77.66) -- (110.87,109) ;
\draw    (130.27,66.56) -- (99.5,98) ;
\draw    (108.34,9.06) -- (140.41,36.36) ;
\draw    (98.2,21.2) -- (131.75,48.88) ;
\draw    (83.64,51.33) -- (136.75,49.88) ;
\draw    (84.07,67.14) -- (137.18,65.69) ;
\draw  [draw opacity=0][fill={rgb, 255:red, 208; green, 208; blue, 208 }  ,fill opacity=1 ] (159.82,69.25) -- (203.49,99.5) -- (212.5,86.5) -- (168.83,56.25) -- cycle ;
\draw  [draw opacity=0][fill={rgb, 255:red, 208; green, 208; blue, 208 }  ,fill opacity=1 ] (159.82,43.24) -- (203.49,12.99) -- (212.5,26) -- (168.83,56.25) -- cycle ;
\draw    (167.45,74.7) -- (203.49,99.5) ;
\draw    (176.2,61.65) -- (212.5,86.5) ;
\draw    (203.49,12.99) -- (169.1,37.3) ;
\draw    (212.5,26) -- (176.6,50.55) ;
\draw  [fill={rgb, 255:red, 208; green, 208; blue, 208 }  ,fill opacity=1 ] (147.07,76.01) .. controls (152.49,92.27) and (164.03,103.51) .. (177.4,103.51) .. controls (195.97,103.51) and (211.03,81.82) .. (211.03,55.06) .. controls (211.03,28.31) and (195.97,6.62) .. (177.4,6.62) .. controls (165.09,6.62) and (154.33,16.14) .. (148.47,30.35) -- (159.08,39.41) .. controls (162.42,27.8) and (169.37,19.81) .. (177.4,19.81) .. controls (188.69,19.81) and (197.84,35.59) .. (197.84,55.06) .. controls (197.84,74.53) and (188.69,90.32) .. (177.4,90.32) .. controls (168.8,90.32) and (161.43,81.15) .. (158.42,68.17) -- cycle ;
\draw  [fill={rgb, 255:red, 223; green, 83; blue, 107 }  ,fill opacity=1 ] (124.7,55.06) .. controls (124.7,40.51) and (136.5,28.71) .. (151.05,28.71) .. controls (165.6,28.71) and (177.4,40.51) .. (177.4,55.06) .. controls (177.4,69.62) and (165.6,81.41) .. (151.05,81.41) .. controls (136.5,81.41) and (124.7,69.62) .. (124.7,55.06) -- cycle ;

\draw (137,92.4) node [anchor=north west][inner sep=0.75pt]    {$\textcolor[rgb]{0.87,0.33,0.42}{n}$};

\end{tikzpicture} & \tikzset{every picture/.style={line width=0.75pt}} 

\begin{tikzpicture}[x=0.45pt,y=0.45pt,yscale=-1,xscale=1]

\draw  [draw opacity=0][fill={rgb, 255:red, 208; green, 208; blue, 208 }  ,fill opacity=1 ] (76.75,45.59) -- (23.64,47.04) -- (24.07,62.85) -- (77.18,61.4) -- cycle ;
\draw  [draw opacity=0][fill={rgb, 255:red, 208; green, 208; blue, 208 }  ,fill opacity=1 ] (87.82,66.54) -- (50.87,104.71) -- (39.5,93.71) -- (76.45,55.54) -- cycle ;
\draw  [draw opacity=0][fill={rgb, 255:red, 208; green, 208; blue, 208 }  ,fill opacity=1 ] (89.12,38.83) -- (48.34,4.77) -- (38.2,16.91) -- (78.98,50.97) -- cycle ;
\draw    (81.39,73.37) -- (50.87,104.71) ;
\draw    (70.27,62.27) -- (39.5,93.71) ;
\draw    (48.34,4.77) -- (80.41,32.07) ;
\draw    (38.2,16.91) -- (71.75,44.59) ;
\draw  [draw opacity=0][fill={rgb, 255:red, 208; green, 208; blue, 208 }  ,fill opacity=1 ] (233.72,68.62) -- (277.39,98.87) -- (286.4,85.86) -- (242.73,55.61) -- cycle ;
\draw  [draw opacity=0][fill={rgb, 255:red, 208; green, 208; blue, 208 }  ,fill opacity=1 ] (233.72,42.61) -- (277.39,12.36) -- (286.4,25.36) -- (242.73,55.61) -- cycle ;
\draw  [fill={rgb, 255:red, 223; green, 83; blue, 107 }  ,fill opacity=1 ] (201,55) .. controls (201,41.19) and (212.19,30) .. (226,30) .. controls (239.81,30) and (251,41.19) .. (251,55) .. controls (251,68.81) and (239.81,80) .. (226,80) .. controls (212.19,80) and (201,68.81) .. (201,55) -- cycle ;
\draw    (241.35,74.06) -- (277.39,98.87) ;
\draw    (250.1,61.01) -- (286.4,85.86) ;
\draw    (277.39,12.36) -- (243,36.66) ;
\draw    (286.4,25.36) -- (250.5,49.91) ;
\draw    (23.64,47.04) -- (76.75,45.59) ;
\draw    (24.07,62.85) -- (77.18,61.4) ;
\draw  [fill={rgb, 255:red, 223; green, 83; blue, 107 }  ,fill opacity=1 ] (61,56) .. controls (61,42.19) and (72.19,31) .. (86,31) .. controls (99.81,31) and (111,42.19) .. (111,56) .. controls (111,69.81) and (99.81,81) .. (86,81) .. controls (72.19,81) and (61,69.81) .. (61,56) -- cycle ;

\draw (124,55) node [anchor=north west][inner sep=0.75pt]    {$\textcolor[rgb]{0.87,0.33,0.42}{n+1}$};

\end{tikzpicture} \\
        \hline
        \tikzset{every picture/.style={line width=0.75pt}} 

\begin{tikzpicture}[x=0.45pt,y=0.45pt,yscale=-1,xscale=1]

\draw  [draw opacity=0][fill={rgb, 255:red, 208; green, 208; blue, 208 }  ,fill opacity=1 ] (135.75,48.88) -- (82.64,50.33) -- (83.07,66.14) -- (136.18,64.69) -- cycle ;
\draw  [draw opacity=0][fill={rgb, 255:red, 208; green, 208; blue, 208 }  ,fill opacity=1 ] (146.82,69.83) -- (109.87,108) -- (98.5,97) -- (135.45,58.83) -- cycle ;
\draw  [draw opacity=0][fill={rgb, 255:red, 208; green, 208; blue, 208 }  ,fill opacity=1 ] (148.12,42.12) -- (107.34,8.06) -- (97.2,20.2) -- (137.98,54.26) -- cycle ;
\draw    (140.39,76.66) -- (109.87,108) ;
\draw    (129.27,65.56) -- (98.5,97) ;
\draw    (107.34,8.06) -- (139.41,35.36) ;
\draw    (97.2,20.2) -- (130.75,47.88) ;
\draw    (82.64,50.33) -- (135.75,48.88) ;
\draw    (83.07,66.14) -- (136.18,64.69) ;
\draw  [draw opacity=0][fill={rgb, 255:red, 208; green, 208; blue, 208 }  ,fill opacity=1 ] (159.82,69.25) -- (203.49,99.5) -- (212.5,86.5) -- (168.83,56.25) -- cycle ;
\draw  [draw opacity=0][fill={rgb, 255:red, 208; green, 208; blue, 208 }  ,fill opacity=1 ] (159.82,43.24) -- (203.49,12.99) -- (212.5,26) -- (168.83,56.25) -- cycle ;
\draw    (167.45,74.7) -- (203.49,99.5) ;
\draw    (176.2,61.65) -- (212.5,86.5) ;
\draw    (203.49,12.99) -- (169.1,37.3) ;
\draw    (212.5,26) -- (176.6,50.55) ;
\draw  [fill={rgb, 255:red, 208; green, 208; blue, 208 }  ,fill opacity=1 ] (176.15,9.62) .. controls (191.55,9.82) and (213,31.92) .. (204.5,56.92) .. controls (196.5,47.42) and (189,24.17) .. (176.15,22.81) .. controls (164.25,23.42) and (158.67,36.18) .. (157.25,41.92) .. controls (147.42,33.68) and (152.76,38.2) .. (147.5,33.42) .. controls (149.28,28.75) and (160.75,9.42) .. (176.15,9.62) -- cycle ;
\draw  [fill={rgb, 255:red, 208; green, 208; blue, 208 }  ,fill opacity=1 ] (176.15,104.22) .. controls (191.55,104.02) and (213,81.92) .. (204.5,56.92) .. controls (196.5,66.42) and (189,89.67) .. (176.15,91.03) .. controls (164.25,90.42) and (158.67,77.66) .. (157.25,71.92) .. controls (147.42,80.16) and (152.76,75.64) .. (147.5,80.42) .. controls (149.28,85.1) and (160.75,104.42) .. (176.15,104.22) -- cycle ;
\draw  [fill={rgb, 255:red, 223; green, 83; blue, 107 }  ,fill opacity=1 ] (124.7,55.06) .. controls (124.7,40.51) and (136.5,28.71) .. (151.05,28.71) .. controls (165.6,28.71) and (177.4,40.51) .. (177.4,55.06) .. controls (177.4,69.62) and (165.6,81.41) .. (151.05,81.41) .. controls (136.5,81.41) and (124.7,69.62) .. (124.7,55.06) -- cycle ;

\draw (137,95.4) node [anchor=north west][inner sep=0.75pt]    {$\textcolor[rgb]{0.87,0.33,0.42}{n}$};

\end{tikzpicture} & \tikzset{every picture/.style={line width=0.75pt}} 

\begin{tikzpicture}[x=0.45pt,y=0.45pt,yscale=-1,xscale=1]

\draw  [draw opacity=0][fill={rgb, 255:red, 208; green, 208; blue, 208 }  ,fill opacity=1 ] (126.75,47.88) -- (73.64,49.33) -- (74.07,65.14) -- (127.18,63.69) -- cycle ;
\draw  [draw opacity=0][fill={rgb, 255:red, 208; green, 208; blue, 208 }  ,fill opacity=1 ] (137.82,68.83) -- (100.87,107) -- (89.5,96) -- (126.45,57.83) -- cycle ;
\draw  [draw opacity=0][fill={rgb, 255:red, 208; green, 208; blue, 208 }  ,fill opacity=1 ] (139.12,41.12) -- (98.34,7.06) -- (88.2,19.2) -- (128.98,53.26) -- cycle ;
\draw    (131.39,75.66) -- (100.87,107) ;
\draw    (120.27,64.56) -- (89.5,96) ;
\draw    (98.34,7.06) -- (130.41,34.36) ;
\draw    (88.2,19.2) -- (121.75,46.88) ;
\draw    (73.64,49.33) -- (126.75,47.88) ;
\draw    (74.07,65.14) -- (127.18,63.69) ;
\draw  [draw opacity=0][fill={rgb, 255:red, 208; green, 208; blue, 208 }  ,fill opacity=1 ] (177.92,68.25) -- (221.59,98.5) -- (230.6,85.5) -- (186.93,55.25) -- cycle ;
\draw  [draw opacity=0][fill={rgb, 255:red, 208; green, 208; blue, 208 }  ,fill opacity=1 ] (177.92,42.24) -- (221.59,11.99) -- (230.6,25) -- (186.93,55.25) -- cycle ;
\draw    (185.55,73.7) -- (221.59,98.5) ;
\draw    (194.3,60.65) -- (230.6,85.5) ;
\draw    (221.59,11.99) -- (187.2,36.3) ;
\draw    (230.6,25) -- (194.7,49.55) ;
\draw  [fill={rgb, 255:red, 223; green, 83; blue, 107 }  ,fill opacity=1 ] (145.82,71.68) .. controls (137.73,78.91) and (125.96,77.22) .. (119.55,67.9) .. controls (113.13,58.58) and (114.49,45.17) .. (122.58,37.93) .. controls (130.68,30.7) and (142.44,32.39) .. (148.86,41.71) .. controls (156.6,52.96) and (160.48,58.59) .. (160.47,58.59) .. controls (160.48,58.59) and (155.59,62.95) .. (145.82,71.68) -- cycle ;
\draw  [fill={rgb, 255:red, 223; green, 83; blue, 107 }  ,fill opacity=1 ] (167.19,67.93) .. controls (173.54,77.15) and (185.31,78.69) .. (193.49,71.39) .. controls (201.68,64.08) and (203.16,50.68) .. (196.81,41.46) .. controls (190.46,32.24) and (178.69,30.69) .. (170.51,38) .. controls (160.64,46.82) and (155.7,51.23) .. (155.69,51.23) .. controls (155.7,51.23) and (159.53,56.8) .. (167.19,67.93) -- cycle ;

\draw (128,82.4) node [anchor=north west][inner sep=0.75pt]    {$\textcolor[rgb]{0.87,0.33,0.42}{n+1}$};

\end{tikzpicture} \\
        \hline
    \end{tabular}
    \caption{Contracting an edge $e$ in a packaged ribbon graph $\pG$.}
    \label{Con}
\end{table}

We can also extend the notion of duality to packaged ribbon graphs. The \emph{dual} $\pG^*=(\bG^*,\V^*,\wVdual,\B^*,\wBdual)$ of a packaged ribbon graph $\pG=(\bG,\V,\wV,\B,\wB)$ is obtained as follows. The ribbon graph $\bG^*$ is the dual of the ribbon graph $\bG$. As the vertices of $\bG$ are in one-to-one correspondence with the boundary components of $\bG^*$, the vertex partition $\V$ on $V(\bG)$ induces a boundary component partition $\B^*$ on the set of all boundary components in $\bG^*$. In a similar way, we can obtain a vertex partition $\V^*$ on $V(\bG^*)$ from the boundary component partition $\B$ on the set of all boundary components in $\bG$. The partition $\V^*$ (resp. $\B^*$) naturally inherits the weighting $\wVdual$ (resp. $\wBdual$) from the weighting $\wB$ (resp. $\wV$) on $\B$ (resp. $\V$). 
 At times we will abuse notation and simply denote the dual of $\pG=(\bG,\V,\wV,\B,\wB)$ by $\pG^*=(\bG^*,\B,\wB,\V,\wV)$. An example of a packaged ribbon graph and its dual is given in Figure~\ref{Packaging}.

 \medskip

We will make use of the following graph that is naturally associated with a packaged ribbon graph. It arises from its underlying graph by identifying all vertices that are in the  same block in the partition. 
\begin{definition} 
    Let $\pG=(\bG,\V,\wV,\B,\wB)$ be a packaged ribbon graph. Its \emph{packaging} $G(\bG;\V)$ is the vertex-weighted graph obtained in the following way. Create a vertex for each block in the partition $\V$. 
    The vertex set of $G(\bG;\V)$ is  the partition $\V$.
    There is an edge $([u], [v])$ in $G(\bG;\V)$ for each edge $(u,v)$ in $\bG$, and each edge has this form. 
   The vertices of $G(\bG;\V)$ are weighted by $\wV$.
\end{definition}
Some examples are shown in Figure~\ref{Packaging}. 
We note that there is an asymmetry in our definitions as we do not define the packaging for the boundary component partition $\B$. We could do this, but for the sake of notational simplicity we work with  $G(\bG^*;\B)$ instead. 

\begin{figure}
    \centering
    \input{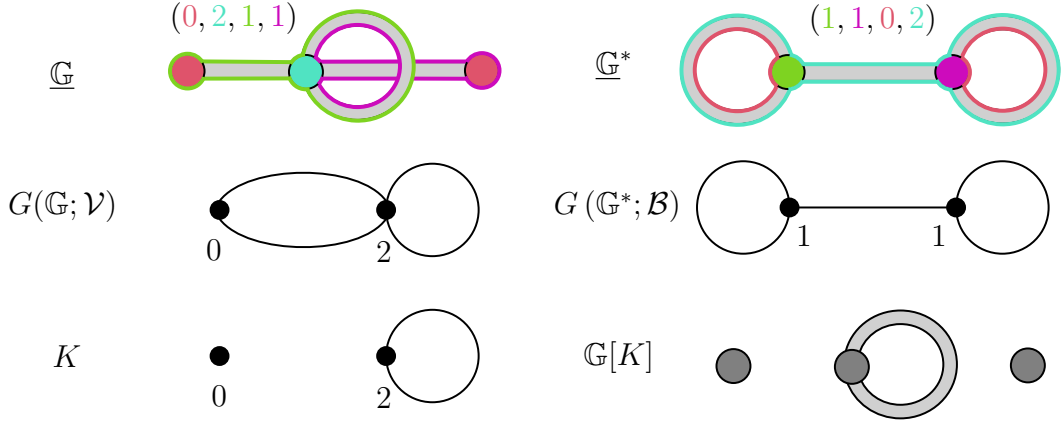}
    \caption{Packaged ribbon graph $\pG$ and its dual $\pG^*$, with packagings $G(\bG;\V)$ and $G(\bG^*;\B)$, respectively, and a subgraph $K$ of $G(\bG;\V)$ with corresponding ribbon subgraph $\bG[K]$.}
    \label{Packaging}
\end{figure}

Every edge of $G(\bG;\V)$ corresponds to a unique edge of $\bG$, and every vertex of $G(\bG;V)$ corresponds to a distinct block in $\V$. Thus, each subgraph $K$ of $G(\bG;\V)$ gives rise to a ribbon subgraph $\mathbb{K}=(U,A)$ where
$U:=\{u\in [w]\in \V : w\in V(K)\}$ 
(thus $U$ consists of all vertices in $\bG$ that are sent to vertices in $K$ when forming the packaging)
and $A$ is the set of edges in $\bG$ corresponding to the edges in $K$. We denote the ribbon subgraph $\mathbb{K}$ obtained in this way by $\bG[K]$.
Moreover, in this case we set 
\[b(\bG[K]):=b(\mathbb{K}).\]
Figure~\ref{Packaging} shows an example of $\bG[K]$ where $b(\bG[K])=4$.

\section{Quasi-tree expansions}

\subsection{Packaged surface Tutte polynomial}
The packaged surface Tutte polynomial was introduced in~\cite{maya1}, though the definition used here is from~\cite{thesis}, as it uses Euler genus instead of genus to better accommodate non-orientability. (Note that~\cite{maya1,thesis} both contain a typo in the definition of $\ga(\bG^*,H)$ that we correct here.) The packaged surface Tutte polynomial extends the surface Tutte polynomial of~\cite{maps1,maps2} to the pseudo-surface setting via packaged ribbon graphs. This extension not only satisfies a deletion-contraction relation for all edges, which is pivotal for the proof of Theorem~\ref{mainthm}, but also facilitates a cleaner quasi-tree expansion. Further discussion of this choice is given in the final Remark of Section~\ref{s.mainthm}.

\begin{definition}\label{PackPoly}
    Let $\pG=(\bG,\V,\wV,\B,\wB)$ be a packaged ribbon graph. Define the \emph{packaged surface Tutte polynomial} $\T(\pG;\boldsymbol{x},\boldsymbol{y})$, with variables $\boldsymbol{x}=(x,x_0,x_1,x_2, \ldots)$, $\boldsymbol{y}=(y,y_0,y_1,y_2,\ldots)$, as follows
    \[\T(\pG;\boldsymbol{x},\boldsymbol{y}):= \sum_{A\subseteq E} x^{n(G(\bG^*|A^c;\B))}y^{n(G(\bG|A;\V))}\prod_{\substack{H \text{ cpt. of } \\  G(\bG^*|A^c;\B)}}x_{\gamma(\bG^*,H)}\prod_{\substack{K \text{ cpt. of } \\  G(\bG|A;\V)}}y_{\gamma(\bG,K)},\]
    where $n(G):=e(G)-v(G)+k(G)$ is the \emph{nullity} of a (vertex-weighted) graph $G$, and
    \[\gamma(\bG,K):=2k(K)+e(K)-v(K)+\wV(K)-b(\bG[K]),\]
    \[\gamma(\bG^*,H):=2k(H)+e(H)-v(H)+\wB(H)-b(\bG^*[H]),\]
    here $\wV(K):=\sum_{v\in V(K)} \wV([v])$, and $\wB(H):=\sum_{v\in V(H)} \wB([v])$.
\end{definition}

The \emph{surface Tutte polynomial} $\Tt(\bG;\boldsymbol{x},\boldsymbol{y})$, where $\boldsymbol{x}$ and $\boldsymbol{y}$ are as above, of an orientable ribbon graph $\bG$ can be recovered from the packaged surface Tutte polynomial as follows. Let $\pG $ be the  packaged ribbon graph $(\bG, \{  \{v\}:v\in V \} , 0_V, \{ \{b\}:b\in B \} , 0_B )$, where  $0_{\cdot}$ denotes the zero-map. Then,
\begin{equation}\label{e.SurfTut}
\Tt(\bG;\boldsymbol{x},\boldsymbol{y})=\T(\pG;\boldsymbol{x'},\boldsymbol{y'}) ,
\end{equation}
where $x'=x$, $y'=y$, $x'_\ga = x_{\ga/2}$, $y'_\ga=y_{\ga/2}$, for $\ga \in \mathbb{N}$.

As alluded to earlier, the packaged surface Tutte polynomial can be equivalently defined  through a recursive deletion-contraction relation that terminates when no edges remain. This was originally shown for orientable packaged ribbon graphs (i.e., those that do not contain a M\"obius band) in~\cite{maya1}. As we allow non-orientability, we instead make use of the analogous result from~\cite{thesis}, noting that an edge is a loop in the packaging if and only if the set of its adjacent vertices is the subset of a block in the partition.

\begin{theorem}[\cite{maya1, thesis}]\label{dcthm}
    Let $\pG=(\bG,\V,\wV,\B,\wB)$ be a packaged ribbon graph and $e\in E$. Then
	\begin{equation}\label{dceq}
	    \T(\pG;\boldsymbol{x},\boldsymbol{y}) = x^{\al(e)}\T(\pG\backslash e;\boldsymbol{x},\boldsymbol{y})+y^{\be(e)}\T(\pG/e;\boldsymbol{x},\boldsymbol{y}),
	\end{equation}
    where
    \[\al(e)=\begin{cases}
        1 & \text{if $e$ is a loop in $G(\bG^*;\B)$,}\\
        0 & \text{otherwise.}
    \end{cases} \enspace \text{ and } \enspace \be(e)=\begin{cases}
        1 & \text{if $e$ is a loop in $G(\bG;\V)$,}\\
        0 & \text{otherwise.}\end{cases}.\]
    If $\bG$ has no edges, then 
       \[\T(\pG;\boldsymbol{x},\boldsymbol{y})=\prod_{[b]\in \B} x_{1- \vert [b]\vert +\wB([b])} \prod_{[v]\in\V} y_{1-\vert [v]\vert +\wV([v])}\]
    where $\vert [x] \vert$ denotes how many elements are in the block $[x]$. 
\end{theorem}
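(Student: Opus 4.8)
The plan is to prove the theorem by verifying that the state-sum definition of $\T$ in Definition~\ref{PackPoly} satisfies both the base case and the recursion~\eqref{dceq}. Since each application of~\eqref{dceq} strictly decreases the number of edges, these two facts pin down $\T$ uniquely and give the claimed equivalence of the state-sum and recursive descriptions. Throughout I would work term by term in the sum over $A\subseteq E$, showing that each term transforms correctly under deletion or contraction of $e$.

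For the base case I would compute directly. If $\bG$ has no edges then $A=\emptyset$ is the only subset and $A^c=\emptyset$, so both $G(\bG|\emptyset;\V)$ and $G(\bG^*|\emptyset;\B)$ are edgeless, their components being single vertices, one per block. Each such graph has nullity $0$, so the $x$- and $y$-prefactors are $1$. For a one-vertex component $K=\{[v]\}$, the ribbon subgraph $\bG[K]$ is $|[v]|$ disjoint discs, whence $b(\bG[K])=|[v]|$ and $\gamma(\bG,K)=2-1+\wV([v])-|[v]|=1-|[v]|+\wV([v])$; the identical computation on the dual side yields $\gamma(\bG^*,H)=1-|[b]|+\wB([b])$. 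Taking products over all blocks recovers exactly the stated formula.

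For the recursion I would split the state sum according to whether $e\in A$. It is convenient first to record the duality $\T(\pG;\boldsymbol{x},\boldsymbol{y})=\T(\pG^*;\boldsymbol{y},\boldsymbol{x})$, which is immediate from the symmetry of the state sum under $A\mapsto A^c$ together with $(\bG^*)^*=\bG$ and the interchange $\V\leftrightarrow\B$. Since $\pG^*\ba e=(\pG/e)^*$ and $\al_{\pG}(e)=\be_{\pG^*}(e)$, the contraction term of~\eqref{dceq} follows from the deletion term applied to $\pG^*$, so it suffices to treat deletion. Here the $y$-side is literally unchanged, because for $e\notin A$ one has $G(\bG|A;\V)=G((\bG\ba e)|A;\V)$ and $b(\bG[K])$ does not involve $e$, so it remains to establish the term-wise identity
\[
x^{n(G(\bG^*|A^c;\B))}\prod_{H}x_{\gamma(\bG^*,H)}=x^{\al(e)}\,x^{n(G(\bG^*/e\,|\,A^c-e;\B'))}\prod_{H'}x_{\gamma(\bG^*/e,H')},
\]
where $H$ (resp. $H'$) ranges over the components of $G(\bG^*|A^c;\B)$ (resp. $G(\bG^*/e|A^c-e;\B')$), using $(\bG\ba e)^*=\bG^*/e$.

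To prove this identity I would analyse the four deletion cases, which correspond precisely to the status of $e$ in the dual packaging $G(\bG^*;\B)$: $e$ is a non-loop exactly in Case~1, giving $\al(e)=0$, and a loop in Cases~2--4, giving $\al(e)=1$. In each case one checks that contracting $e$ in $G(\bG^*|A^c;\B)$ changes the nullity by exactly $\al(e)$ (non-loop contraction preserves nullity, loop contraction drops it by one), and that the weight adjustment prescribed for $\wB'$ exactly offsets the change in the surviving $\gamma(\bG^*,\cdot)$ factors. The main obstacle is this genus-matching: one must show $\gamma(\bG^*,H)=\gamma(\bG^*/e,H')$ for corresponding components, i.e. that the increment $+1$ (or the additivity $\wB([a])+\wB([b])$) in the block weight precisely cancels the simultaneous changes in the Euler-genus term $2k+e-v$ and in the boundary count $b(\bG^*[H])$ caused by ribbon contraction of $e$ in $\bG^*$. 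This reduces to a careful application of the Euler genus formula $\ga=2k-v+e-b$ within each case, tracking how contraction of the various edge types (non-loop, orientable loop, non-orientable loop in $\bG^*$) alters vertices, boundary components and genus; the loop cases, where nullity, weight and genus all shift at once, are the delicate part, and this bookkeeping is where essentially all of the content lies.
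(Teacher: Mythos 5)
The paper does not prove Theorem~\ref{dcthm}: it is imported verbatim from~\cite{maya1} and the thesis reference, and is used here only as an ingredient in the proof of Theorem~\ref{mainthm}. So there is no in-paper argument to compare yours against, and I can only assess your proposal on its own terms. Your plan is sound and is the natural one: the base-case computation is correct ($b(\bG[K])=|[v]|$ for a single-block component, giving $\gamma=1-|[v]|+\wV([v])$), and splitting the state sum over $e\in A$ versus $e\notin A$, then using the self-duality $\T(\pG;\boldsymbol{x},\boldsymbol{y})=\T(\pG^*;\boldsymbol{y},\boldsymbol{x})$ to reduce the contraction half to the deletion half, is a legitimate and efficient reduction. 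Two points deserve more care than you give them. First, the identity $\pG^*\ba e=(\pG/e)^*$ at the level of \emph{packaged} ribbon graphs is not automatic: it requires checking that the four cases in the definition of packaged deletion (acting on $\B$, $\wB$) are exactly dual to the four cases of packaged contraction (acting on $\V$, $\wV$), and that the ``natural correspondence'' clauses on the untouched partition match up; you assert this without verification, and it is where a hidden mismatch would most plausibly live. Second, in the $\gamma$-matching step the boundary count $b(\bG^*[H])$ does not in fact change under ribbon contraction (boundary components of $\bG^*[H]$ and $(\bG^*[H])/e$ are in natural bijection), so the cancellation is purely between the weight increment ($+1$ or additivity) and the change in $2k+e(H)-v(H)$; your phrasing suggests all three quantities move, which would make the bookkeeping look harder than it is. With those two items nailed down, the case analysis goes through in all four deletion cases and the argument is complete.
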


\subsection{A quasi-tree expansion for the packaged surface Tutte polynomial}\label{s.mainthm}
Our main result is that the packaged surface Tutte polynomial has an equivalent formulation as a sum over its quasi-trees, or a quasi-tree expansion. This completes the expected three formulations akin to the classic Tutte polynomial.

\begin{theorem}\label{mainthm}
Let $\pG=(\bG,\V,\wV,\B,\wB)$ be a connected packaged ribbon graph and $\prec$ be a total ordering of its edges. Then
\[\T(\pG;\boldsymbol{x},\boldsymbol{y})=\sum_{Q\in\Q_{\bG}}x^{n(G(\bG^*|D_Q^*\cup  N_Q^*;\B)}y^{n(G(\bG|D_Q\cup N_Q;\V)}\T(\pG\ba D_Q^*\cup N_Q^*\con D_Q\cup N_Q;\boldsymbol{x},\boldsymbol{y}).\]
\end{theorem}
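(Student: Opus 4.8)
The plan is to induct on the number of edges $e(\bG)$, applying the deletion--contraction relation of Theorem~\ref{dcthm} to the $\prec$-minimal edge $e$. For the base case $e(\bG)=0$ the only quasi-tree is $\bG$ itself, all six activity sets $D_Q,D_Q^*,O_Q,O_Q^*,N_Q,N_Q^*$ are empty, both nullity exponents vanish, and the single summand is exactly $\T(\pG)$. For the inductive step I would first partition $\Q_\bG$ according to whether $e\in E(Q)$. Since contraction preserves the number of boundary components, deleting $e$ sets up a bijection between the external quasi-trees ($e\notin E(Q)$) and $\Q_{\bG\ba e}$, and contracting $e$ sets up a bijection between the internal quasi-trees ($e\in E(Q)$) and $\Q_{\bG/e}$. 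The unifying observation, via Proposition~\ref{p.pd}, is that in both cases the relevant chord diagram transforms the same way: using $\bG/e=\bG^{e}\ba e$ and the commutation of partial duality with deletion one checks $(\bG\ba e)^{E(Q)}=\bG^{E(Q)}\ba e$ for external $Q$ and $(\bG/e)^{E(Q)-e}=\bG^{E(Q)}\ba e$ for internal $Q$; that is, both operations simply remove the chord $e$ from the one-vertex ribbon graph $\bG^{E(Q)}$.

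Since $e$ is $\prec$-minimal it links no lower edge, so $e$ is always live: internally live when $e\in E(Q)$ and externally live when $e\notin E(Q)$. I would therefore reduce the contraction half of the recursion to the deletion half by duality. The identity $\T(\pG^*;\boldsymbol{x},\boldsymbol{y})=\T(\pG;\boldsymbol{y},\boldsymbol{x})$, which follows from Definition~\ref{PackPoly} by reindexing $A\mapsto A^c$, together with Lemma~\ref{l.but} and the relations $D_Q^*=D_{Q'}$, $O_Q^*=O_{Q'}$, $N_Q^*=N_{Q'}$ for $Q'=\bG^*\ba E(Q)$, interchanges the roles of $x$ and $y$, of $\V$ and $\B$, and of deletion and contraction. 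Consequently it suffices to prove the ``external half'' of the recursion, namely that the sum of the summands over the external quasi-trees equals $x^{\al(e)}\T(\pG\ba e;\boldsymbol{x},\boldsymbol{y})$; applying this statement to $\pG^*$ and dualising then yields the ``internal half'' $y^{\be(e)}\T(\pG/e;\boldsymbol{x},\boldsymbol{y})$, and the two halves reassemble into Theorem~\ref{dcthm}. Within the external half I would split further on whether $e$ is orientable ($e\in O_Q^*$) or non-orientable ($e\in N_Q^*$) with respect to each $Q$, the distinction being whether $e$ survives into the reduced graph $\pG_Q:=\pG\ba(D_Q^*\cup N_Q^*)\con(D_Q\cup N_Q)$ or is deleted from it; the exponent $\al(e)$ should be absorbed into the change in $n\bigl(G(\bG^*|D_Q^*\cup N_Q^*;\B)\bigr)$ caused by inserting the chord $e$, since $e$ raises this nullity by exactly $1$ if and only if $e$ is a loop of $G(\bG^*;\B)$.

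The main obstacle, and the genuinely topological feature absent from the classical spanning-tree expansion, is that removing the $\prec$-minimal chord $e$ can change the activity of the remaining edges: any edge $f$ that links $e$ is forced to be dead in $\bG$ (it links the smaller edge $e$), yet in $\bG\ba e$ it becomes live whenever $e$ was its only lower linking partner, as already happens for two interlaced orientable loops. Hence the reduced graphs $\pG_Q$ and $(\pG\ba e)_{Q'}$ attached to corresponding quasi-trees need not carry the same edge set, and the matching cannot be made term-by-term in the naive way. I expect the crux of the argument to be a precise ``activity-transfer'' lemma tracking how each of the six classes $D_Q,D_Q^*,O_Q,O_Q^*,N_Q,N_Q^*$ is redistributed when $e$ is deleted, followed by a verification that the induced changes in the two nullity graphs $G(\bG^*|\,\cdot\,;\B)$ and $G(\bG|\,\cdot\,;\V)$, combined with the factor $x^{\al(e)}$, exactly compensate these reclassifications. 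Here the structural fact that live edges are pairwise non-linked (if two live edges linked, the larger would link a smaller edge and hence be dead) is useful: it shows that the live orientable edges form pairwise non-interlaced orientable loops in $\bG^{E(Q)}$, so the minimal external live orientable edge $e$ sits as an isolated orientable loop in the chord diagram of $\pG_Q$ and can be peeled off as a clean factor, aligning $\T(\pG_Q)$ with $\T((\pG\ba e)_{Q'})$.
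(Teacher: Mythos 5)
Your proposal diverges from the paper's proof at the outset: the paper does not induct on the $\prec$-minimal edge. It resolves $\T(\pG;\boldsymbol{x},\boldsymbol{y})$ by applying Theorem~\ref{dcthm} to the edges in \emph{decreasing} order, skipping any edge that is currently a bridge or a plane loop, and then identifies the terminal packaged ribbon graphs with quasi-trees, using Claim~\ref{claim} to show that the skipped edges are exactly the live orientable ones. Processing from the top down is not a cosmetic choice: the activity of an edge $f$ with respect to $Q$ depends only on edges below $f$ in the order, so when $f$ is reached in the paper's process all lower edges are still present, and the bridge/plane-loop test in the current graph can be matched exactly to the activity class of $f$. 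Your bottom-up induction forfeits this.

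That is where the genuine gap lies. You correctly observe that deleting the minimal edge $e$ can reclassify the remaining edges --- any $f$ whose only lower linking partner is $e$ flips from dead to live --- so the summand attached to an external quasi-tree $Q$ in the expansion of $\T(\pG;\boldsymbol{x},\boldsymbol{y})$ and the summand attached to the same $Q\in\Q_{\bG\ba e}$ in the expansion of $\T(\pG\ba e;\boldsymbol{x},\boldsymbol{y})$ involve different sets $D_Q^*$ and $O_Q^*$, hence different reduced graphs and different nullity exponents; the matching is not term-by-term. You name this as ``the crux of the argument'' and defer it to an ``activity-transfer lemma'' that you neither state precisely nor prove, together with an unverified claim that the resulting changes in the two nullities plus the factor $x^{\al(e)}$ compensate the reclassification. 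Without that, the inductive step does not close, so what you have is an outline rather than a proof. The parts you do supply --- the identification of the external quasi-trees with $\Q_{\bG\ba e}$ and the internal ones with $\Q_{\bG/e}$, the identity $(\bG\ba e)^{E(Q)}=\bG^{E(Q)}\ba e$ via Proposition~\ref{p.pd}, and the duality reduction of the contraction half to the deletion half --- are reasonable but address only the easy part. To salvage this route you would need to strengthen the induction hypothesis so that activities are tracked relative to the edge order of the original $\bG$ rather than the inherited order on $\bG\ba e$; alternatively, the paper's top-down resolution dissolves the difficulty entirely.
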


Note that the quasi-tree expansion of $\T(\pG;\boldsymbol{x},\boldsymbol{y})$ does depend on the ordering $\prec$, since the activities of each edge with respect to a quasi-tree depend on the edge orders.

\begin{proof}
Using Equation~\eqref{dceq} we delete and contract edges of $\pG$ in reverse order beginning with the highest order edge. If an edge is a bridge or a plane loop, then we skip it and proceed to the next edge. This process terminates when either no edges remain or all remaining edges are bridges and plane loops. (Figure~\ref{f.prf} shows an example illustrating this process.)  
This results in an expression of the form
\[\T(\pG;\boldsymbol{x},\boldsymbol{y})=\sum_{\pH\in\mathcal{T}(\pG)}x^iy^j \T(\pH;\boldsymbol{x},\boldsymbol{y}),\]
where $\mathcal{T}(\pG)$ is the set of terminal packaged ribbon graphs obtained from the above process. For $\pH\in\mathcal{T}(\pG)$, let $A$ and $B$ denote the set of contracted edges and the set of deleted edges respectively (i.e. $\pH=\pG\ba B \con A$). Then $i$ is the minimum number of edges that need to be deleted to break all cycles in  $G(\pG^*|B;\B)$, and $j$ is the minimum number of edges that need to be deleted to break all cycles  in $G(\pG|A;\V)$. These are equal to the nullity of the respective packagings. 

 \begin{figure}
    \centering
    \input{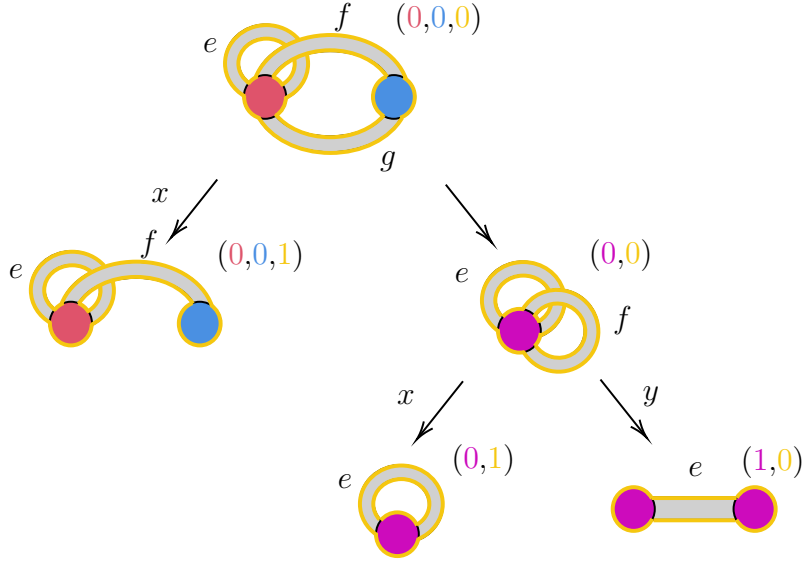}
    \caption{Applying the deletion-contraction process from the proof of Theorem~\ref{mainthm} to a packaged ribbon graph with ordering $e\prec f\prec g$. The labels on the arrows are the coefficients for the packaged surface Tutte polynomial applied to the packaged ribbon graph at the head of the arrow. }
    \label{f.prf}
\end{figure}

We show that the terminal packaged ribbon graphs are in one-to-one correspondence with the quasi-trees of $\pG$. 
As quasi-trees only have one boundary component, a subset $A\subseteq E(\pG)$ is the edge set of a quasi-tree of $\pG$ if and only if $\pG\ba A^c\con A$ is an edgeless single vertex.
Deleting a bridge or contracting a plane loop disconnects $\pG$ and is the only way to do so through deleting or contracting a single edge. Each terminal packaged ribbon graph $\pH=\pG\ba B\con A$ is connected and either an edgeless single vertex or contains only bridges and plane loops. So there is a unique way to delete or contract any remaining edges (i.e. contract bridges and delete plane  loops) to obtain a single vertex with no edges. This gives rise to a unique quasi-tree with edge set equal to the union of $A$ and the set of bridges in $\pH$. Its external edges are the union of $B$ and the set of plane loops in $\pH$. All quasi-trees can be constructed in this way. 

We require the following claim that we prove later. 
\begin{claim}\label{claim}
    Within the terminal packaged ribbon graphs any remaining edges are live and orientable with respect to the corresponding quasi-tree.  
\end{claim}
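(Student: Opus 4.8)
The plan is to identify, for each remaining edge $e$ and the quasi-tree $Q$ it determines, the single-vertex partial dual that simultaneously controls activity and orientability, and then to read off both properties from the mere fact that $e$ was skipped. By Lemma~\ref{l.but} together with the symmetry of the process under $\pG\mapsto\pG^*$ (which exchanges $\al$ and $\be$, swaps deletion and contraction, and sends plane loops to bridges), it suffices to treat a remaining bridge $e$, which is internal; the remaining plane loops, which are external, then follow by applying the internal case to the dual.

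Fix a terminal $\pH=\pG\ba B\con A$ and a remaining bridge $e$, so $e\in E(Q)$. Let $\bG_e$ be the current ribbon graph at the moment $e$ is reached; since edges are processed in decreasing order and skipped edges are left in place, $\bG_e=\bG\con A_{\succ e}\ba B_{\succ e}$, where $A_{\succ e}$ and $B_{\succ e}$ are the sets of contracted and deleted edges higher than $e$. This $\bG_e$ is connected, because the process only ever deletes or contracts edges that are neither bridges nor plane loops. Set $J:=E(Q)\cap E(\bG_e)=E(Q)\setminus A_{\succ e}$. Using $\bG\con X=\bG^X\ba X$ and that partial duality commutes with deletion and contraction (Proposition~\ref{p.pd}), and noting $A_{\succ e}\cup J=E(Q)$, one computes
\[(\bG_e)^{J}=\bG^{E(Q)}\ba(A_{\succ e}\cup B_{\succ e}).\]
Since $\bG^{E(Q)}$ has a single vertex and deletion creates no new vertices, $(\bG_e)^J$ has a single vertex; hence $J$ is a quasi-tree of $\bG_e$, and $(\bG_e)^J$ is exactly the single-vertex ribbon graph defining linking and orientability with respect to $J$.

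The heart of the argument is the following local characterisation, which I would prove by analysing the loop $e$ in $(\bG_e)^J$: for an internal edge $e\in J$ of a connected ribbon graph $\bG_e$ with quasi-tree $J$, the edge $e$ is a bridge of $\bG_e$ if and only if $e$ is orientable and links no edge with respect to $J$, i.e. $e$ is a trivial non-interlaced orientable loop in $(\bG_e)^J$. Indeed, dualising back, a non-interlaced orientable loop becomes a cut edge, an interlaced orientable loop lies on a cycle, and a non-orientable loop becomes a loop; so $e$ is a bridge in precisely the first case. This extends the activity characterisation of Champanerkar--Kofman--Stolzfus~\cite{Champan} and Vignes-Tourneret~\cite{V-T} to the non-orientable setting, and the non-orientable case together with the ``interlaced implies non-bridge'' direction for an arbitrary quasi-tree is where the work lies; this is the main obstacle.

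Granting this, since $e$ is skipped it is a bridge of $\bG_e$, so $e$ is orientable and links nothing with respect to $J$. It remains to transfer these from $(\bG_e)^J$ to $\bG^{E(Q)}$, which the displayed identity makes routine: $(\bG_e)^J$ is obtained from $\bG^{E(Q)}$ by deleting $A_{\succ e}\cup B_{\succ e}$. Orientability of the loop $e$ is unaffected by deleting other edges, so $e$ is orientable with respect to $Q$. Moreover every edge $e'\prec e$ lies outside $A_{\succ e}\cup B_{\succ e}$ and thus survives in $(\bG_e)^J$; since deletion preserves interlacement among surviving edges and $e$ links nothing in $(\bG_e)^J$, the edge $e$ is not interlaced with any $e'\prec e$ in $\bG^{E(Q)}$, so $e$ is live with respect to $Q$. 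Finally, applying the dual of the local characterisation (via Lemma~\ref{l.but} and $\bG\leftrightarrow\bG^*$) to the remaining plane loops, which are orientable and correspond to bridges of the dual, shows they are externally live and orientable, completing the proof of the claim.
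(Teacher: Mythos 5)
Your proposal is correct and takes essentially the same route as the paper: the identity $(\bG\ba B'\con A')^{E(Q)-A'}=\bG^{E(Q)}\ba(A'\cup B')$ relating the graph at the moment $e$ is skipped to the one-vertex partial dual $\bG^{E(Q)}$, followed by the observation that a bridge (or, dually, a plane loop) can never appear as a non-orientable or interlaced loop in any partial dual; your reduction to the bridge case via duality and the packaging of this observation as an iff ``local characterisation'' are cosmetic reorganisations, and the direction of that characterisation you actually use is precisely the assertion the paper itself makes without further proof at the same point. The one blemish is the phrase ``a non-orientable loop becomes a loop'' when dualising back: partial duality over all of $J$ can turn a loop into a non-loop edge, so that justification is false as stated, though the fact you need (non-orientable in $(\bG_e)^J$ implies not a bridge of $\bG_e$) is true and is exactly what the paper asserts.
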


So for each terminal packaged ribbon graph, edges that are not in it are either dead or live and non-orientable with respect to the corresponding quasi-tree. Thus, the set of edges that were contracted is the set of internally dead and internally live non-orientable edges. Similarly, the set of edges that were deleted is the set of externally dead and externally live non-orientable edges. So each terminal packaged ribbon graph is $\pG\ba D^*_Q\cup N_Q^*\con D_Q\cup  N_Q$ where $Q$ is its corresponding quasi-tree.
Hence, the result follows. 
\end{proof}

\begin{example}
    Let $\pG=(\bG,\V,\wV,\B,\wB)$ be the packaged ribbon graph consisting of a loop interlaced with a 2-cycle where every block has size one and weight 0. A depiction of $\pG$ is shown at the top of Figure~\ref{f.prf}. Suppose the edges of $\bG$ are ordered $e\prec f \prec g$. 
    The ribbon graph $\bG$ has three quasi-trees given by the edge sets $\{f\}$, $\{g\}$ and $\{e,f,g\}$. The quasi-tree $\bG|f$ has $e$ externally live, $f$ internally live and $g$ externally dead. The quasi-tree $\bG|g$ has $e$ externally live, $f$ externally dead and $g$ internally dead. Finally, the quasi-tree $\bG$ has $e$ internally live and $f$ and $g$ internally dead. All edges are orientable with respect to each quasi-tree. So 
    \begin{multline*}
        \T(\pG;\boldsymbol{x},\boldsymbol{y})=x\T(\pG\ba g;\boldsymbol{x},\boldsymbol{y}) +x\T(\pG\ba f\con g ;\boldsymbol{x},\boldsymbol{y})+y\T(\pG\con \{f,g\};\boldsymbol{x},\boldsymbol{y})\\
        =x(xx_2y_0(xy_0+1)+yx_0y_0(xy_0+1)) + x(xx_2y_0+yx_0y_0)+y(xx_0y_0+yx_0y_2).
    \end{multline*}
\end{example}

\begin{proof}[Proof of Claim~\ref{claim}]
Let $\pH=\pG\ba B\con A$ be a terminal packaged ribbon graph. Denote its ribbon graph by $\bH$ and corresponding quasi-tree by $Q$. If $\bH$ is edgeless, then we are done, so let $e$ be an edge in $\bH$.
First, we suppose $e$ is non-orientable with respect to $Q$. 
So $e$ is a non-orientable loop in $\bG^{E(Q)}$, and thus  in  $\bG^{E(Q)}\ba A\cup B$. By Proposition~\ref{p.pd}, \begin{equation}\label{eq.claim}
    \bG^{E(Q)}\ba A\cup B= (\bG^{A}\ba A\cup B)^{E(Q)-A}=\bH^{E(Q)-A}.
\end{equation} So there esists a partial dual of $\bH$ in which $e$ is a non-orientable loop. However, as $e$ is either a bridge or a plane loop, no such partial dual exists and $e$ must be orientable.

Now suppose $e$ is dead and therefore links a lower ordered edge $f$ with respect to $Q$. Let $A'\subseteq A$ be the set of edges in $A$ whose order is greater than $e$, and $B'\subseteq B$ be the set of edges in $B$ whose order is greater than $e$. Then $e,f\notin A'\cup B'$ and $e$ is either a bridge or a plane loop in $\bH'=\bG\ba B'\con A'$. As $e$ links $f$, then $e$ is interlaced with $f$ in $\bG^{E(Q)}$, and thus in $\bG^{E(Q)}\ba A'\cup B'$. By Equation~\eqref{eq.claim}, there exists a partial dual of $\bH'$ in which $e$ and $f$ are loops that interlace one another. In other words, there exists a spanning ribbon subgraph of $\bH'$ with a boundary component that, when traveling around it, meets arcs on the edges $e$ and $f$ twice in the order $e$ $f$ $e$ $f$. However, as $e$ is either a bridge or a plane loop in $\bH'$, this cannot happen. So $e$ must be live.  
\end{proof}

\begin{remark}
    Theorem~\ref{mainthm} can be extended to disconnected packaged ribbon graphs, though we omit the details here. Quasi-forests naturally inherit the definitions of live, dead, non-orientable, internal and external edges. So the proof of Theorem~\ref{mainthm} can be modified for quasi-forests, noting that if a spanning subgraph $\bH$ is a quasi-forest then $\pG\ba E(\pH)^c \con E(\pH)$ is a collection of $k(\pG)$-many isolated vertices. It is then straightforward to obtain an expression for $\T(\pG,\boldsymbol{x},\boldsymbol{y})$ as a sum over the quasi-forests of $\pG$.
\end{remark}

\begin{remark}
When constructing quasi-tree expansions for ribbon graph polynomials, the appropriate setting is graphs embedded in pseudo-surfaces or packaged ribbon graphs. This is because these expansions use deletion and contraction, and deleting and contracting edges results in a loss of information about the topology of the embedded graph when working with surfaces, instead of pseudo-surfaces. (Further discussion on this can be found in~\cite[Section~2]{HM}.) The quasi-tree expansions for the Bollob\'as--Riordan~\cite{Champan, V-T} and Kruskal polynomials~\cite{Butler} inadvertently use the pseudo-surface setting. This can be seen with the construction $G_Q$ which is the packaging (without any vertex weights) $G(\bG\con D_Q\cup N_Q\ba D_Q^*\cup N_Q^*\cup O^*_Q;\V)$.

So whilst it may be possible to directly obtain a quasi-tree expansion for the surface Tutte polynomial, the packaged ribbon graph setting is unavoidable. Further, as the deletion-contraction relation of Theorem~\ref{dcthm} is pivotal to the proof of Theorem~\ref{mainthm} and Theorem~\ref{dcthm} does not hold for the surface Tutte polynomial, finding a quasi-tree expansion for the surface Tutte polynomial directly would be needlessly complicated. Finally, the surface Tutte polynomial~\cite{maps2} for non-orientable ribbon graphs uses a signed genus that distinguishes orientablility, which is not compatible with recursive relations. So any quasi-tree expansion for it would be restricted to orientable ribbon graphs. For these reasons we choose to work with the packaged surface Tutte polynomial for this paper. 

However, using Equation~\eqref{e.SurfTut} and Theorem~\ref{mainthm}, one can obtain a quasi-tree expansion for the surface Tutte polynomial that holds for orientable ribbon graphs.
\end{remark}

\subsection{Derivation of expansions for other polynomials}
Here we rederive the quasi-tree expansions for the Krushkal polynomial, and thus, by extension, for the Bollob\'as--Riordan and Las~Vergnas polynomials, from Theorem~\ref{mainthm}. 

The Krushkal polynomial was introduced~\cite{Krushkal} for graphs embedded in orientable surfaces to unify the earlier topological Tutte polynomials of Bollob\'as--Riordan and Las~Vergnas. It was later extended to non-orientable surfaces by Butler~\cite{Butler}, and we use that variant here.
In our notation, the \emph{generalised Krushkal polynomial} is defined by
\begin{equation*}
    P(\bG;\al,\be,a,b):=\sum_{A\subseteq E}\al^{k(\bG|A)-k(\bG)} \be^{k(\bG^*|A^c)-k(\bG^*)} a^{\tfrac{1}{2}\ga(\bG|A)} b^{\tfrac{1}{2}\ga(\bG^*|A^c)}.
\end{equation*}
It can be obtained from the packaged surface Tutte polynomial as follows. Let $\bG$ be a ribbon graph and $\pG$ be the  packaged ribbon $(\bG, \{  \{v\}:v\in V \} , 0_V, \{ \{b\}:b\in B \} , 0_B )$ (recall that  $0_{\cdot}$ denotes the zero-map). Then,
\begin{equation}\label{kspec}
    P(\bG;\al,\be,a,b) = (\al\be)^{-k(\bG)}\T(\pG;\boldsymbol{x},\boldsymbol{y}),
\end{equation}
with $x=1$, $x_\ga = \be b^{\tfrac{1}{2}\ga}$, $y=1$, $y_\ga=\al a^{\tfrac{1}{2}\ga}$, for $\ga \in \mathbb{N}$.
Since the generalised Krushkal polynomial $P(\bG;\al,\be,a,b)$ is a specialisation of the packaged surface Tutte polynomial, we can recover its quasi-tree expansion~\cite[Theorem~4.4]{Butler} from Theorem~\ref{mainthm}.

\begin{theorem}[\cite{Butler}]\label{t.krush}
    Let $\bG$ be a connected ribbon graph, then 
    \[P(\bG;\al,\be,a,b)=\sum_{Q\in \Q_\bG}T(G_Q;\al+1,a+1)T(G_{Q^*}^*;\be+1,b+1)a^{\tfrac{1}{2}\ga(\bG|D_Q\cup N_Q)}b^{\tfrac{1}{2}\ga(\bG^*|D_Q^*\cup N_Q^*)},\]
    where $G_Q$ is the graph whose vertices are the connected components of $\bG|D_Q\cup N_Q$ and whose edges are those in $O_Q$; $G^*_{Q^*}$ is the graph whose vertices are the connected components of $\bG^*|D_Q^*\cup N_Q^*$ and whose edges are those in $O^*_Q$; and for a graph $G$ 
    \[T(G;x,y):=\sum_{A\subseteq E(G)} (x-1)^{k(G|A)-k(G)}(y-1)^{n(G)}.\]
\end{theorem}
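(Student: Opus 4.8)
The plan is to feed the Krushkal specialisation of Equation~\eqref{kspec} into the quasi-tree expansion of Theorem~\ref{mainthm} and then evaluate the resulting terminal polynomials. Since this specialisation sets $x=1$ and $y=1$, the two nullity prefactors $x^{n(\cdots)}$ and $y^{n(\cdots)}$ in Theorem~\ref{mainthm} reduce to $1$, so after multiplying by $(\al\be)^{-k(\bG)}$ we obtain
\[P(\bG;\al,\be,a,b)=(\al\be)^{-k(\bG)}\sum_{Q\in\Q_\bG}\T(\pH;\boldsymbol{x},\boldsymbol{y}),\]
where $\pH=\pG\ba D_Q^*\cup N_Q^*\con D_Q\cup N_Q$ is the terminal packaged ribbon graph attached to $Q$, evaluated at the specialised variables. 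By Claim~\ref{claim} and the analysis in the proof of Theorem~\ref{mainthm}, the only edges of $\pH$ are the internally live orientable edges $O_Q$, which are bridges of $\pH$, and the externally live orientable edges $O_Q^*$, which are plane loops of $\pH$. Everything thus reduces to showing that, under the specialisation, $\T(\pH;\boldsymbol{x},\boldsymbol{y})$ equals $(\al\be)^{k(\bG)}a^{\tfrac12\ga(\bG|D_Q\cup N_Q)}b^{\tfrac12\ga(\bG^*|D_Q^*\cup N_Q^*)}T(G_Q;\al+1,a+1)T(G^*_{Q^*};\be+1,b+1)$.

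To evaluate $\T(\pH;\boldsymbol{x},\boldsymbol{y})$ I would expand it via its state-sum definition (Definition~\ref{PackPoly}). With $x=y=1$ the two nullity factors again disappear, leaving a sum over $S\subseteq O_Q\cup O_Q^*$ of a product of $x_\ga=\be b^{\ga/2}$-factors over the components of the dual packaging $G(\pH^*|S^c;\B)$ and $y_\ga=\al a^{\ga/2}$-factors over the components of the primal packaging $G(\pH|S;\V)$. The key structural observation is that this sum decouples. Writing $S=S_1\sqcup S_2$ with $S_1\subseteq O_Q$ and $S_2\subseteq O_Q^*$: in the primal packaging the edges of $O_Q^*$ are orientable (plane) loops, so they leave both the number of components and each component genus $\gamma(\bG,K)$ unchanged, whence the primal factor depends only on $S_1$; dually, in $G(\pH^*|S^c;\B)$ the edges of $O_Q$ are loops, so the dual factor depends only on $O_Q^*\setminus S_2$. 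As $(S_1,S_2)$ ranges over $O_Q\times O_Q^*$ the product therefore factors as a sum over $S_1\subseteq O_Q$ times a sum over $S_2'=O_Q^*\setminus S_2\subseteq O_Q^*$.

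It then remains to identify each factor. Using that $G(\pH|S_1;\V)=G_Q|S_1$ for $S_1\subseteq O_Q$, the primal factor becomes $\sum_{S_1\subseteq O_Q}\al^{k(G_Q|S_1)}a^{\tfrac12\sum_K\gamma(\bG,K)}$, and the crucial identity to establish is
\[\tfrac12\sum_{K}\gamma(\bG,K)=\tfrac12\ga(\bG|D_Q\cup N_Q)+n(G_Q|S_1),\]
the sum being over the components $K$ of $G_Q|S_1$. Granting this, the factor equals $\al^{k(\bG)}a^{\tfrac12\ga(\bG|D_Q\cup N_Q)}T(G_Q;\al+1,a+1)$ after pulling out the $S_1$-independent genus shift and recognising the definition of $T(G_Q;\al+1,a+1)$ (using that $\pH$ is connected, so $k(G_Q)=k(\bG)=1$). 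The dual factor is handled identically after passing to $\pG^*$ and invoking Lemma~\ref{l.but}, which identifies $O_Q^*$, $D_Q^*$, $N_Q^*$ with the internal data of the dual quasi-tree $Q'=\bG^*\ba E(Q)$; it yields $\be^{k(\bG)}b^{\tfrac12\ga(\bG^*|D_Q^*\cup N_Q^*)}T(G^*_{Q^*};\be+1,b+1)$. Multiplying the two factors, cancelling $(\al\be)^{k(\bG)}$ against the prefactor, and summing over $Q$ then gives the claimed expansion.

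The main obstacle is the displayed genus identity. It requires showing that the vertex weights of $\pH$ accumulated while contracting $D_Q\cup N_Q$ reassemble exactly into the Euler genus $\ga(\bG|D_Q\cup N_Q)$, and that adding the bridges of $S_1$ raises $\tfrac12\sum_K\gamma$ by the graph nullity $n(G_Q|S_1)$. Concretely, expanding $\sum_K\gamma(\bG,K)$ via $\gamma(\bG,K)=2k(K)+e(K)-v(K)+\wV(K)-b(\bG[K])$ and using that each edge of $O_Q$ is a bridge of $\pH$ (so that $b(\pH\langle S_1\rangle)+|S_1|$ is independent of $S_1$) reduces the identity to the single weight-accounting statement $\wV(\pH)=\ga(\bG|D_Q\cup N_Q)+\bigl(v_{\mathrm{ribbon}}(\pH)-v(G_Q)\bigr)$, which I would prove by induction on $|D_Q\cup N_Q|$ from the contraction weight rules. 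Comparable care is needed to justify the decoupling, namely that orientable plane loops genuinely contribute trivially to both the component count and the genus $\gamma$ of the primal packaging, and symmetrically for $O_Q$ in the dual packaging.
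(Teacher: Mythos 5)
Your proposal is correct and follows essentially the same route as the paper: specialise Theorem~\ref{mainthm} via Equation~\eqref{kspec}, expand each terminal polynomial $\T(\pH;\boldsymbol{x},\boldsymbol{y})$ as a state sum, decouple it over $O_Q$ and $O_Q^*$ using the bridge/plane-loop structure, and reduce to the genus identity $\tfrac{1}{2}\ga_{\V}(\hat{\bG}|A)=n(G(\hat{\bG}|A;\V))+\tfrac{1}{2}\ga(\bG|D_Q\cup N_Q)$ together with the identification of the packaging with $G_Q|A$. The only cosmetic difference is that you propose proving the weight-accounting step by induction on the contracted edges, where the paper verifies it by direct observations about how contraction and deletion affect weights, boundary components, and connectivity.
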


\begin{proof}
    Let $\pG=(\bG, \{  \{v\}:v\in V \} , 0_V, \{ \{b\}:b\in B \} , 0_B )$.
    Using Theorem~\ref{mainthm} and Equation~\eqref{kspec}, 
    \begin{equation*}
        P(\bG;\al,\be,a,b)=(\al\be)^{-k(\bG)}\sum_{Q\in \Q_\bG}\sum_{A\subseteq E(\hat{\bG})} \prod_{\substack{H \text{ cpt. of } \\  G(\hat{\bG}^*|A^c;\B)}} \be b^{\tfrac{1}{2}\gamma(\hat{\bG}^*,H)}
        \prod_{\substack{K \text{ cpt. of } \\  G(\hat{\bG}|A;\V)}} \al a^{\tfrac{1}{2}\gamma(\hat{\bG},K)},
    \end{equation*}
          where $\hat{\pG}=(\hat{\bG},\V, \wV,\B, \wB)$ is the packaged ribbon graph $\pG\ba D^*_Q\cup N^*_Q\con D_Q\cup N_Q$. It is straightforward to verify that $\ga(\hat{\bG},K)$ and $\ga(\hat{\bG}^*,H)$ are additive over connected components. Denote the sum of $\ga(\hat{\bG},K)$ and $\ga(\hat{\bG}^*,H)$ over the connected components  as $\ga_{\V}(\hat{\bG}|A)$ and $\ga_{\B}(\hat{\bG}^*|A^c)$ respectively. 
            So we can rewrite the above as 
    \begin{equation*}
         P(\bG;\al,\be,a,b)=\sum_{Q\in \Q_\bG}\sum_{A\subseteq E(\hat{\bG})}  \al^{k(G(\hat{\bG}|A;\V))-k(\bG)}\be^{k(G(\hat{\bG}^*|A^c;\B))-k(\bG)}a^{\tfrac{1}{2}\gamma_{\V}(\hat{\bG}|A)}b^{\tfrac{1}{2}\gamma_{\B}(\hat{\bG}^*|A^c)}.
    \end{equation*}
    Recall from the proof of Theorem~\ref{mainthm} that $\hat{\bG}$ has edge set $E(\hat{\bG})=O_Q\cup O^*_Q$, where  the edges in $O_Q$ are bridges and those in $O^*_Q$ are plane loops. If an edge is a bridge, then it is a plane loop in the dual $\hat{\bG}^*$, and vice versa. If $e$ is a plane loop, then deleting $e$ 
    does not change $\ga_\V$, nor the number of connected components of the ribbon graph. So 
    \begin{multline*}
         P(\bG;\al,\be,a,b)=\\\sum_{Q\in \Q_\bG}\sum_{A\subseteq O_Q}
         \sum_{B\subseteq O_Q^*}
         \al^{k(G(\hat{\bG}|A;\V))-k(\bG)}\be^{k(G(\hat{\bG}^*|B;\B))-k(\bG)}a^{\tfrac{1}{2}\gamma_\V(\hat{\bG}|A)}b^{\tfrac{1}{2}\gamma_\B(\hat{\bG}^*|B)}.
    \end{multline*}
    We make the following observations. As deletion does not change the vertex weights and all blocks started with weight 0, then $\sum_{v\in V(\hat{\bG}|A)} \wV([v])=n(G(\bG|D_Q\cup N_Q;\V))$. Additionally, as contraction does not change the number of boundary components and every edge in $A$ is a bridge, then $b(\hat{\bG}|A)=b(\bG|D_Q\cup N_Q\cup A)=b(\bG|D_Q\cup N_Q)-|A|$. Finally, note that contracting an edge in a packaged ribbon graph does not disconnect the packaging, so $k(G(\bG|D_Q\cup N_Q;\V))=v(G(\bG/D_Q \cup N_Q;\V))$. Similar observations can be made for the dual $\pG^*$. So we obtain
    \[\tfrac{1}{2}\gamma_\V(\hat{\bG}|A)=n(G(\hat{\bG}|A;\V))+\tfrac{1}{2}\ga(\bG|D_Q\cup N_Q),\]
    and
    \[ \tfrac{1}{2}\gamma_\B(\hat{\bG}^*|B)=n(G(\hat{\bG}^*|B;\V))+\tfrac{1}{2}\ga(\bG^*|D_Q^*\cup N_Q^*).\]
    Note that the packaging $G(\hat{\bG}|A;\V)$ (without vertex weights) is equivalent to the graph $G_Q|A$, and the packaging $G(\hat{\bG}^*|B;\B)$ (without vertex weights) is equivalent to the graph $G_Q^*|B$. 
    Finally, we can deduce from the  proof of Theorem~\ref{mainthm} that contracting $D_Q\cup N_Q$ and deleting $D_Q^*\cup N^*_Q$ does not change the number of connected components in the ribbon graph, so $k(\bG\ba D^*_Q\cup N^*_Q\con D_Q\cup N_Q)=k(\bG)$.
    Hence, the result follows.
\end{proof}

\begin{remark}
    Butler derives a quasi-tree expansions for the Las~Vergnas polynomial~\cite[Theorems~5.6]{Butler} and rederives the quasi-tree expansion for the Bollob\'as--Riordan~\cite[Theorems~5.2]{Butler} polynomial from  the quasi-tree expansion of the Krushkal polynomial, Theorem~\ref{t.krush}. So it follows that such expansions can also be recovered from Theorem~\ref{mainthm}, the quasi-tree expansion of the packaged surface Tutte polynomial.
\end{remark}

\section*{Declarations}

This research did not receive any specific grant from funding agencies in the public, commercial, or not-for-profit sectors.
For the purpose of open access, the author has applied a Creative Commons Attribution (CC
BY) licence to any Author Accepted Manuscript version arising. No underlying data is associated
with this article. There are no conflicts of interest.

 \bibliographystyle{abbrv} 
\bibliography{quasitree.bib}
\end{document}